\newtheorem{theorem}{Theorem}[section]
\newtheorem{corollary}[theorem]{Corollary}
\newtheorem{lemma}[theorem]{Lemma}
\newtheorem{observation}[theorem]{Observation}
\theoremstyle{definition}
\newtheorem{definition}[theorem]{Definition}
\theoremstyle{remark}
\theoremstyle{definition}
\newtheorem{example}[theorem]{Example}
\numberwithin{equation}{section}
\definecolor{helena}{rgb}{.2,.8,.4}
\begin{document}

\allowdisplaybreaks

\title[An extension of the Hermite-Biehler theorem]{An extension of the Hermite-Biehler theorem with application to polynomials with one positive root}


\author{Richard Ellard}
\address{
	Richard Ellard,
	School of Mathematics and Statistics,
	University College Dublin,
	Belfield, Dublin 4, Ireland
}
\email{richardellard@gmail.com}
\thanks{The authors' work was supported by Science Foundation Ireland under Grant 11/RFP.1/MTH/3157.}

\author{Helena \v{S}migoc}
\address{
	Helena \v{S}migoc,
	School of Mathematics and Statistics,
	University College Dublin,
	Belfield, Dublin 4, Ireland
}
\email{helena.smigoc@ucd.ie}

\subjclass[2010]{26C10, 93D20, 15A29}

\keywords{Polynomials, Hurwitz stability, Hermite-Biehler, Rule of signs, Real root isolation, Nonnegative Inverse Eigenvalue Problem}

\date{January 2017}

\begin{abstract}
	If a real polynomial $f(x)=p(x^2)+xq(x^2)$ is Hurwitz stable (every root if $f$ lies in the open left half-plane), then the Hermite-Biehler Theorem says that the polynomials $p(-x^2)$ and $q(-x^2)$ have interlacing real roots. We extend this result to general polynomials by giving a lower bound on the number of real roots of $p(-x^2)$ and $q(-x^2)$ and showing that these real roots interlace. This bound depends on the number of roots of $f$ which lie in the left half plane. Another classical result in the theory of polynomials is Descartes' Rule of Signs, which bounds the number of positive roots of a polynomial in terms of the number of sign changes in its coefficients. We use our extension of the Hermite-Biehler Theorem to give an inverse rule of signs for polynomials with one positive root.
\end{abstract}

\maketitle

\section{Introduction}

Recall that a real polynomial $f$ is called \emph{(Hurwitz) stable} if every root of $f$ lies in the open left half-plane. Determining the stability of real polynomials is of fundamental importance in the study of dynamical systems and as such, several equivalent characterisations have been given. One such characterisation is the \emph{Hermite-Biehler Theorem} \cite{Hermite1856, Biehler1879}, a proof of which can also be found in \cite{Holtz2003}. The Hermite-Biehler Theorem has been instrumental in the study of the ``robust parametric stability problem'', that is, the problem of guaranteeing that stability is preserved by real coefficient perturbations (see \cite{Kharitonov1978, Bhattacharyya1987}).

\begin{theorem}[\bf Hermite-Biehler Theorem]
	Let
	\[
		f(x):=a_0x^n+a_1x^{n-1}+\cdots+a_n
	\]
	be a real polynomial and write $f(x)=p(x^2)+xq(x^2)$, where $p(x^2)$ and $xq(x^2)$ are the components of $f(x)$ made up by the even and odd powers of $x$, respectively. Let $x_{e1},x_{e2},\ldots$ denote the distinct nonnegative real roots of $p(-x^2)$ and let $x_{o1},x_{o2},\ldots$ denote the distinct nonnegative real roots of $q(-x^2)$, where both sequences are arranged in ascending order. Then $f$ is stable if and only if the following conditions hold:
	\begin{enumerate}
		\item[\textup{(i)}] all of the roots of $p(-x^2)$ and $q(-x^2)$ are real and distinct;
		\item[\textup{(ii)}] $a_0$ and $a_1$ have the same sign;
		\item[\textup{(ii)}] $0<x_{e1}<x_{o1}<x_{e2}<x_{o2}<\cdots$.
	\end{enumerate}
\end{theorem}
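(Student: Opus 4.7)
The plan is to translate Hurwitz stability of $f$ into a phase condition on $f(iy)$ for real $y$, and then recognise that phase condition as the interlacing asserted in (i)--(iii). The key substitution is $x=iy$ with $y$ real, giving
\[
f(iy)=p(-y^{2})+iy\,q(-y^{2}),
\]
so the real and imaginary parts of $f$ on the imaginary axis are precisely the polynomials featured in the statement.

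I would then apply the argument principle to the closed contour formed by the imaginary segment from $-iR$ to $iR$ together with the right-half semicircle of radius $R$. Assuming $f$ has no zeros on the imaginary axis, this gives $\Delta_{C}\arg f = \pm 2\pi N_{R}$, where $N_{R}$ is the number of roots of $f$ in the open right half-plane. Since $|f(z)|\sim |a_{0}||z|^{n}$ as $|z|\to\infty$, the semicircle contributes $\pm n\pi$, and letting $R\to\infty$ yields $\Delta_{-\infty}^{\infty}\arg f(iy) = \pi(n-2N_{R})$. Thus Hurwitz stability (i.e.\ $N_{R}=0$ with no imaginary-axis zeros) is equivalent to $\arg f(iy)$ being strictly monotonic in $y$ and changing by exactly $n\pi$ over $\mathbb{R}$.

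Next I would translate this monotone phase change into the zero-interlacing conditions. Writing $u(y)=p(-y^{2})$ (even in $y$) and $v(y)=y\,q(-y^{2})$ (odd in $y$), the plane curve $(u(y),v(y))$ rotates steadily about the origin precisely when $u$ and $v$ have only simple real zeros and those zeros are strictly interlaced on $\mathbb{R}$, each contributing $\pi/2$ to the total argument change. A phase change of exactly $n\pi$ therefore forces $u$ and $v$ together to carry $n$ simple real zeros in strict alternation, which is condition (i). By the even/odd parity of $u$ and $v$, global interlacing on $\mathbb{R}$ is equivalent to the interlacing (iii) of their nonnegative roots, and the direction of rotation at $y=\pm\infty$---controlled by the leading coefficients of $p$ and $q$, i.e.\ by $a_{0}$ and $a_{1}$---pins down the sign condition (ii). Running the deduction in both directions gives the equivalence.

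The main obstacle I anticipate is the careful bookkeeping of root counts and parities: the degrees of $p$ and $q$ differ depending on the parity of $n$, and the zero of $v$ at $y=0$ may or may not be present depending on whether $q(0)=0$. Arranging these contributions so that the total number of real zeros matches $n$, and so that (ii) emerges naturally as the orientation of $f(iy)$ at infinity, requires a uniform treatment of the cases $n$ even and $n$ odd. One clean route is to replace $f(iy)$ by $f(iy)/(1+y^{2})^{n/2}$ and track its winding over the one-point compactification of $\mathbb{R}$, which absorbs the case split on $n\bmod 2$ into a single count.
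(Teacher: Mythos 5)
Your outline is correct and follows essentially the same route the paper itself is built on: the paper states the Hermite--Biehler Theorem without proof (citing Holtz), but its Theorem \ref{thm:CauchyIndex} and Theorem \ref{thm:Real/ImaginaryPartsVariant} carry out exactly your argument --- the contour/argument-principle computation giving $\Delta\arg f=\pi(n-2N_R)$, the translation into a Cauchy index of $Q/P$, and the conversion of a maximal index into real, simple, strictly interlacing zeros --- with the substitution $g(x)=i^nf(-ix)$ of Corollary \ref{cor:Even/OddVariant} playing the role of your $x=iy$. The only step you assert rather than argue is that a total phase change of $n\pi$ forces the zeros of $u$ and $v$ to be real, simple and alternating; that is precisely the bookkeeping the paper's proof of Theorem \ref{thm:Real/ImaginaryPartsVariant} performs via the sequences $\mathcal{T}$ and $\mathcal{T}'$, and it goes through as you expect.
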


The Hermite-Biehler theorem says that, if $f(x)=p(x^2)+xq(x^2)$ is stable, then the polynomials $p(-x^2)$ and $q(-x^2)$ have real, interlacing roots. In Section \ref{sec:dRealRoots}, we will extend the Hermite-Biehler Theorem by showing that, even if $f$ is not stable (suppose $f$ has $n_-$ roots in the left half-plane and $n_+$ roots in the right), then it is still possible to give a lower bound on the number of real roots of $p(-x^2)$ and $q(-x^2)$. This bound is given in terms of the quantity $|n_--n_+|$. Furthermore, we show that these real roots interlace.

Another classical result in the theory of polynomials is Descartes' Rule of Signs. We say that a real polynomial
\[
	f(x)=a_0x^n+a_1x^{n-1}+a_2x^{n-2}+\cdots+a_n, \hspace{6mm} a_0\neq0
\]
\emph{has $k$ sign changes} if $k$ sign changes occur between consecutive nonzero elements of the sequence $a_0,a_1,\ldots,a_n$. Descartes' Rule of Signs states that the number of positive roots of $f$ is either equal to $k$, or is less than $k$ by an even number. Descartes' rule gives the exact number of positive roots in only two cases:
\begin{enumerate}[(i)]
	\item $f$ has no sign changes, in which case, $f$ has no positive roots, or
	\item $f$ has precisely one sign change, in which case, $f$ has precisely one positive root.
\end{enumerate}

Conversely to (i), if every root of $f$ has real part less than or equal to zero, then $f$ has no sign changes. To see this, we need only observe that, if the roots of $f$ are labeled $-\eta_1,-\eta_2,\ldots,-\eta_s,-\alpha_1\pm i\beta_1,-\alpha_2\pm i\beta_2,\ldots,-\alpha_m\pm i\beta_m$, where $\eta_j,\alpha_j,\beta_j\geq0$ and $s+2m=n$, then the polynomial
\[
	\frac{1}{a_0}f(x)=\prod_{j=1}^s(x+\eta_j)\prod_{j=1}^m\left( (x+\alpha_j)^2+\beta_j^2 \right)
\]
has nonnegative coefficients, and consequently, every nonzero coefficient of $f$ has the same sign.

In general, the converse of (ii) is not true; however, in Section \ref{sec:StrongerLSLemma}, we will use our extension of the Hermite-Biehler Theorem to prove that, if $f$ has at most one root with positive real part, then the sequences $a_0,a_2,a_4,\ldots$ and $a_1,a_3,a_5,\ldots$ each feature at most one sign change.

Polynomials with one positive root (in particular, inverse rules of signs for such polynomials) are of interest in a number of areas, such as in polynomial real root isolation, i.e. the process of finding a collection of intervals of the real line such that each interval contains precisely one real root and each real root is contained in some interval. Modern real root isolation algorithms typically use a version of \emph{Vincent's Theorem} \cite{Vincent1836}, the proof of which depends on some kind of inverse rule of sign for polynomials with one positive root. For example, the proof of Vincent's Theorem given by Alesina and Galuzzi \cite{AlesinaGaluzzi2000} uses a special case of a theorem of Obreschkoff \cite{Obreschkoff1966}, which we state below:

\begin{theorem}{\bf\cite{Obreschkoff1966}}\label{thm:Obreschkoff}
	If a real polynomial $f$ of degree $n$ has a simple positive root $r$ and all other roots lie in the wedge
	\begin{equation}\label{eq:Sr3}
		S_{\sqrt{3}}:=\{-\alpha+i\beta : \alpha>0,\, |\beta|\leq\sqrt{3}\alpha\},
	\end{equation}
	then $f$ has precisely one sign change.
\end{theorem}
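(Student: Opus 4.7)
The plan is to write $f(x)=a(x-r)g(x)$, where $a\neq 0$ and $g$ is a real polynomial whose roots all lie in $S_{\sqrt{3}}$, and then extract the sign sequence of $f$ from coefficient information about $g$. Without loss of generality $a>0$. Since every root of $g$ has strictly negative real part, $g$ has all positive coefficients; its real factorisation takes the form $g(x)=\prod_j(x+\eta_j)\prod_\ell(x^2+b_\ell x+c_\ell)$ with $\eta_j,b_\ell,c_\ell>0$, and the wedge condition $|\beta|\leq\sqrt{3}\alpha$ on a conjugate pair $-\alpha\pm i\beta$ translates into the coefficient inequality $b_\ell^2\geq c_\ell$ for the corresponding quadratic factor.

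The heart of the argument is the claim that, writing $g(x)=g_0x^{n-1}+g_1x^{n-2}+\cdots+g_{n-1}$, the sequence $(g_k)$ is log-concave: $g_k^2\geq g_{k-1}g_{k+1}$ for every interior $k$. Each individual factor is log-concave in its own coefficients---$(1,\eta_j)$ vacuously, and $(1,b_\ell,c_\ell)$ precisely because $b_\ell^2\geq c_\ell$---so I would argue by induction on the number of factors that positive log-concavity is preserved under multiplication by any such factor. For the linear step ($h\mapsto(x+\eta)h$), expanding the log-concavity defect of $h_k+\eta h_{k-1}$ as a polynomial in $\eta$ produces three non-negative coefficients; the middle one uses only the derived inequality $h_{k-1}h_k\geq h_{k-2}h_{k+1}$, which is immediate from monotonicity of the ratios $h_j/h_{j-1}$.

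The main obstacle is the quadratic step. Multiplying by $x^2+bx+c$ gives new coefficients $h_k+bh_{k-1}+ch_{k-2}$, and expanding the defect as a polynomial in $b,c$ yields six summands. Five are $\geq 0$ by log-concavity of $(h_k)$ and monotonicity of ratios, but the $c^1$-summand $2h_{k-2}h_k-h_{k-1}^2-h_{k-3}h_{k+1}$ need not be of one sign. The wedge condition $b^2\geq c$ is precisely what permits the regrouping
\[
	b^2(h_{k-1}^2-h_{k-2}h_k)+c(2h_{k-2}h_k-h_{k-1}^2-h_{k-3}h_{k+1})=(b^2-c)(h_{k-1}^2-h_{k-2}h_k)+c(h_{k-2}h_k-h_{k-3}h_{k+1}),
\]
with both new summands $\geq 0$: the first from $b^2\geq c$ combined with log-concavity, the second from the ``distance-$2$'' inequality $h_{k-2}h_k\geq h_{k-3}h_{k+1}$, which reduces to comparing $h_{k-2}/h_{k-3}$ with $h_{k+1}/h_k$ in a sequence whose consecutive ratios are non-increasing.

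With log-concavity of $(g_k)$ established, the conclusion follows quickly. Setting $\sigma_k:=g_k/g_{k-1}$, the sequence $(\sigma_k)$ is non-increasing. The coefficients of $(x-r)g(x)$ are $g_0$, the intermediate entries $g_{k-1}(\sigma_k-r)$ for $1\leq k\leq n-1$, and $-rg_{n-1}$. Since $\sigma_k-r$ changes sign from non-negative to non-positive at most once as $k$ grows, the full sign sequence has at most one sign change; Descartes' rule of signs supplies the matching lower bound from the positive root $r$, so $f$ has exactly one sign change.
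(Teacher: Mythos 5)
Your proof is correct, but note that the paper does not prove this statement at all: Theorem \ref{thm:Obreschkoff} is quoted from Obreschkoff's book and used as a black box, so there is no ``paper proof'' to match. What you have written is a complete, self-contained argument of the classical coefficient-based kind. The key computations check out: for a conjugate pair $-\alpha\pm i\beta$ the quadratic factor is $x^2+2\alpha x+(\alpha^2+\beta^2)$, and $b^2-c=3\alpha^2-\beta^2\geq0$ is exactly the wedge condition, so each factor of $g$ has a positive, log-concave coefficient sequence; your expansion of the log-concavity defect in the quadratic step is right (the six summands are the $1$-, $b$-, $b^2$-, $c$-, $bc$- and $c^2$-terms, of which only the $c$-term can be negative), and the regrouping $(b^2-c)(h_{k-1}^2-h_{k-2}h_k)+c(h_{k-2}h_k-h_{k-3}h_{k+1})$ is the correct fix, with the second bracket nonnegative because the ratios $h_j/h_{j-1}$ of a positive log-concave sequence are non-increasing. (This is essentially the standard proof that convolution preserves log-concavity of positive sequences, with the wedge condition entering only to make each quadratic factor log-concave.) The final step---that $(x-r)$ times a polynomial with positive log-concave coefficients has sign pattern $+\cdots+-\cdots-$ because $g_k/g_{k-1}-r$ is non-increasing---is also sound. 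It is worth observing how your route differs from the machinery the paper actually develops: the paper's Corollary \ref{cor:Even/OddVariant} and Theorem \ref{thm:LSVariant} work under the much weaker hypothesis that the remaining roots lie in the closed left half-plane, and correspondingly obtain only the weaker conclusion that the even-indexed and odd-indexed coefficients each change sign at most once; your argument needs the full wedge hypothesis but delivers Obreschkoff's stronger conclusion of one sign change in the whole coefficient sequence.
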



Polynomials with one positive root also arise in problems that consider the sign patterns of matrices (in particular, companion and related matrices). One such problem is the \emph{Nonnegative Inverse Eigenvalue Problem}, or NIEP. This is the (still open) problem of characterising those lists of complex numbers which are \emph{realisable} as the spectrum of some (entrywise) nonnegative matrix. Polynomials with one positive root are of particular importance in the NIEP, and as such, the NIEP has already motivated several results on the coefficients of polynomials of this type. In this context, the polynomial $f$ represents the characteristic polynomial of the realising matrix and its one positive root represents the Perron eigenvalue of the realising matrix.

One of the earliest results in the NIEP was given by Sule\v{i}manova \cite{Suleimanova1949} when she proved the following:

\begin{theorem}{\bf\cite{Suleimanova1949}}
	Let $\sigma:=(\rho,\lambda_2,\lambda_3,\ldots,\lambda_n)$, where $\rho\geq0$ and $\lambda_i\leq0:$ $i=2,3,\ldots,n$. Then $\sigma$ is the spectrum of a nonnegative matrix if and only if
	\[
		\rho+\lambda_2+\lambda_3+\cdots+\lambda_n\geq0.
	\]
\end{theorem}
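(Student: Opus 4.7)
The plan is to handle necessity by a one-line trace argument and sufficiency by exhibiting an explicit nonnegative realising matrix, namely the companion matrix of the characteristic polynomial. For necessity, if $\sigma$ is the spectrum of a nonnegative matrix $A$, then $\rho+\lambda_2+\cdots+\lambda_n=\operatorname{tr}(A)$, which is a sum of nonnegative diagonal entries.

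For sufficiency, I would set
\[
    f(x)=(x-\rho)\prod_{i=2}^n(x-\lambda_i)=(x-\rho)g(x),
\]
where $g(x)=\prod_{i=2}^n(x+|\lambda_i|)=\sum_{k=0}^{n-1}b_k x^{n-1-k}$, so $b_0=1$ and $b_k=e_k(|\lambda_2|,\ldots,|\lambda_n|)\geq 0$ are the elementary symmetric polynomials in the $|\lambda_i|$. Expanding $f(x)=xg(x)-\rho g(x)$, the coefficient of $x^{n-k}$ in $f$ is $c_k=b_k-\rho b_{k-1}$ for $1\leq k\leq n-1$, and $c_n=-\rho b_{n-1}\leq 0$ automatically. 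The companion matrix of $f$ has $0$'s and $1$'s except for its last column $(-c_n,-c_{n-1},\ldots,-c_1)^T$, so it is entrywise nonnegative precisely when $b_k\leq\rho\,b_{k-1}$ for every $k\in\{1,\ldots,n-1\}$.

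The case $k=1$ reads $b_1=|\lambda_2|+\cdots+|\lambda_n|\leq\rho$, which is exactly the hypothesis $\rho+\lambda_2+\cdots+\lambda_n\geq 0$. The cases $k\geq 2$ are the main technical point, and I would derive them from the combinatorial identity $k\,b_k\leq b_1\,b_{k-1}$: upon expanding the product $b_1\,b_{k-1}$, each $k$-subset of $\{|\lambda_2|,\ldots,|\lambda_n|\}$ contributes to $b_k$ exactly $k$ times (one per choice of which factor is pulled from $b_1$), plus extra nonnegative ``squared'' terms arising when the index from $b_1$ already lies in the $(k-1)$-subset. Chaining yields $b_k\leq\tfrac{1}{k}b_1 b_{k-1}\leq\rho\,b_{k-1}$, and the companion matrix construction then provides a nonnegative matrix with spectrum $\sigma$.

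The main obstacle is precisely this chain $b_k\leq\rho\,b_{k-1}$ for $k\geq 2$: the hypothesis only constrains $b_1$, so one needs a mechanism to propagate the bound to higher $b_k$. The double-counting identity above is the cleanest route; alternatively one could induct on $k$ or invoke a Maclaurin-type inequality. Either way, what makes the argument work is that the $b_k$'s are forced to shrink relative to $b_{k-1}$ by the nonnegativity of the $|\lambda_i|$'s.
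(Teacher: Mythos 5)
Your proposal is correct and follows essentially the same route the paper sketches (Perfect's argument): show every non-leading coefficient of $f(x)=(x-\rho)\prod_{i=2}^n(x-\lambda_i)$ is nonpositive, so that the companion matrix is a nonnegative realising matrix, with necessity coming from the trace. Your double-counting inequality $k\,b_k\leq b_1 b_{k-1}$ correctly supplies the propagation step $b_k\leq\rho\,b_{k-1}$ that the paper leaves implicit.
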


Perhaps the most elegant proof of Sule\v{i}manova's result is due to Perfect \cite{Perfect1953}, who showed that, under the assumptions of the theorem, every coefficient of the polynomial
\[
	f(x)=(x-\rho)\prod_{i=2}^n(x-\lambda_i),
\]
apart from the leading coefficient, is nonpositive, and hence, the companion matrix of $f$ is nonnegative (note that, since Sule\v{i}manova's hypotheses guarantee the coefficient of $x^{n-1}$ in $f$ is negative, the same result follows immediately from Theorem \ref{thm:Obreschkoff}).

Later, Laffey and \v{S}migoc \cite{LaffeySmigoc} generalised Sule\v{i}manova's theorem to complex lists with one positive element and $n-1$ elements with real part less than or equal to zero:

\begin{theorem}{\bf\cite{LaffeySmigoc}}\label{thm:LS}
  Let $\rho\geq0$ and let $\lambda_2,\lambda_3,\ldots,\lambda_n$ be complex numbers such that
$\mathrm{Re}\,\lambda_i\leq0$ for all $i=2,3,\ldots,n$. Then the list
$\sigma:=(\rho,\lambda_2,\lambda_3,\ldots,\lambda_n)$ is the spectrum of a nonnegative
matrix if and only if the following conditions hold:
  \begin{enumerate}[(i)]
    \item[\textup{(i)}]
    $\sigma$ is self-conjugate;
    \item[\textup{(ii)}]
    $\rho+\lambda_2+\lambda_3+\cdots+\lambda_n\geq0$;
    \item[\textup{(iii)}]
    $(\rho+\lambda_2+\lambda_3+\cdots+\lambda_n)^2\leq n(\rho^2+\lambda_2^2+\lambda_3^2+\cdots+\lambda_n^2)$.
  \end{enumerate}
  Furthermore, when the above conditions are satisified, $\sigma$ may be realised by a matrix of the form
$C+\alpha I_n$, where $C$ is a nonnegative companion matrix with trace zero and $\alpha$ is a
nonnegative scalar.
\end{theorem}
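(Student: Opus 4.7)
The necessity of (i)--(iii) is the easier direction. Condition (i) is immediate because the characteristic polynomial of a real matrix has real coefficients. Condition (ii) is the nonnegativity of the trace of a nonnegative matrix. Condition (iii) is the Johnson--Loewy--London inequality $(\operatorname{tr} A)^2 \leq n\,\operatorname{tr}(A^2)$, which for a nonnegative $A$ follows from $\operatorname{tr}(A^2) \geq \sum_i A_{ii}^2$ combined with Cauchy--Schwarz.

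For sufficiency, I would prove the stronger ``furthermore'' assertion. Set
\[
  \alpha := \tfrac{1}{n}\bigl(\rho + \lambda_2 + \cdots + \lambda_n\bigr),
\]
which is nonnegative by (ii) and satisfies $\alpha \leq \rho$ because $\operatorname{Re}\lambda_i \leq 0$. Define the shifted list $\mu_1 := \rho - \alpha \geq 0$ and $\mu_i := \lambda_i - \alpha$ for $i \geq 2$, so that $\sum_{i=1}^n \mu_i = 0$ and $\operatorname{Re}\mu_i \leq -\alpha \leq 0$ for $i \geq 2$. If the companion matrix $C$ of the polynomial $g(x) := \prod_{i=1}^n (x - \mu_i)$ is nonnegative, then $C + \alpha I_n$ is nonnegative and realises $\sigma$. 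Since $C$ is nonnegative precisely when every non-leading coefficient of $g$ is nonpositive, the problem reduces to establishing this sign condition.

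Write $g(x) = x^n + c_1 x^{n-1} + c_2 x^{n-2} + \cdots + c_n$. The trace-zero condition gives $c_1 = 0$, and Newton's identity then yields
\[
  c_2 = -\tfrac{1}{2}\sum_{i=1}^n \mu_i^2 = -\tfrac{1}{2}\Bigl(\rho^2 + \lambda_2^2 + \cdots + \lambda_n^2 - n\alpha^2\Bigr),
\]
so that condition (iii) is equivalent to $c_2 \leq 0$. Since $g$ has at most one root of positive real part (namely $\mu_1$), the inverse sign rule proved in Section~\ref{sec:StrongerLSLemma} applies: the even-indexed subsequence $1, c_2, c_4, \ldots$ and the odd-indexed subsequence $0, c_3, c_5, \ldots$ each contain at most one sign change. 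Because the even-indexed sequence begins with a positive entry and has $c_2 \leq 0$, the lone permitted sign change must occur at the second entry, forcing $c_{2k} \leq 0$ for every $k \geq 1$.

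The main obstacle will be the analogous conclusion for the odd-indexed subsequence, where the leading entry $c_1 = 0$ provides no anchor. To close this gap I would factor $g(x) = (x - \mu_1)\,h(x)$, observing that $h$ has all of its roots in the closed left half-plane and hence has nonnegative coefficients $b_0 = 1, b_1, \ldots, b_{n-1}$, with $b_1 = \mu_1$ by the trace condition. The coefficient identity $c_k = b_k - \mu_1 b_{k-1}$ then reformulates the target inequality as $b_k \leq \mu_1 b_{k-1}$ for all $k \geq 2$; the base case $k = 2$ is exactly (iii). For $k \geq 3$, I expect to combine the subsequence constraint from Section~\ref{sec:StrongerLSLemma} with a Descartes parity count (the number of sign changes of $g$ has the same parity as its unique positive root) and the direct observation that $c_n = (-1)^n \prod_i \mu_i \leq 0$ (by pairing complex conjugate roots and tracking the parity of negative real roots) to anchor the odd subsequence from the other end and force every odd-indexed coefficient to be nonpositive.
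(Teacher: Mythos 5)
First, a point of reference: the paper does not actually prove Theorem \ref{thm:LS} --- it is quoted from \cite{LaffeySmigoc}, and the paper only records that the crucial ingredient of the original proof is Lemma \ref{lem:LS}. Your reduction is exactly the one used there. The necessity direction (realness of the characteristic polynomial, nonnegativity of the trace, and the JLL inequality $(\mathrm{tr}\,A)^2\leq n\,\mathrm{tr}(A^2)$ via $\mathrm{tr}(A^2)\geq\sum_iA_{ii}^2$ and Cauchy--Schwarz) is correct. So is the sufficiency setup: shifting by $\alpha=\frac{1}{n}(\rho+\lambda_2+\cdots+\lambda_n)$ to a trace-zero, self-conjugate list with one nonnegative real element and the rest in the closed left half-plane, reducing to nonnegativity of the companion matrix of $g$, i.e.\ to $c_k\leq0$ for all $k$, and verifying $c_1=0$ and that $c_2\leq0$ is equivalent to (iii). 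Your treatment of the even-indexed coefficients is also fine. At this point Lemma \ref{lem:LS}, applied with $a_1=c_1=0\leq0$ and $a_2=c_2\leq0$, finishes the proof in one line.

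The soft spot is your final paragraph. Your worry about the odd-indexed coefficients is legitimate \emph{if} all you import from Section \ref{sec:StrongerLSLemma} is ``at most one sign change'': a pattern $0,+,-,\ldots,-$ has exactly one sign change, begins with $0$ and (when it is visible there at all) ends nonpositive, so neither the Descartes parity count for $g$ nor the sign of $c_n$ rules it out --- note also that when $n$ is even, $c_n$ belongs to the even subsequence and anchors nothing on the odd side. As sketched, that repair does not close the gap. But the repair is unnecessary: Theorem \ref{thm:LSVariant}(ii) asserts strictly more than a bound on the number of sign changes --- it says the odd subsequence consists of a (possibly empty) initial block of strictly positive entries, then one nonpositive entry, then strictly negative entries. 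Since $c_1=0$, the positive block must be empty ($s'=1$), whence $c_3,c_5,\ldots<0$ up to the last nonzero odd coefficient and the rest vanish. Equivalently, you may simply quote Lemma \ref{lem:LS}. With that substitution for your last paragraph, the proposal is a complete and correct proof, following the same route as the original.
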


The crucial ingredient in Laffey and \v{S}migoc's result was the following lemma (also proved by the authors):

\begin{lemma}\textup{\bf\cite{LaffeySmigoc}}\label{lem:LS}
	Let $(\lambda_2,\lambda_3,\ldots,\lambda_n)$ be a self-conjugate list of complex numbers with nonpositive real parts, let $\rho\geq0$ and let
	\[
		f(x):=(x-\rho)\prod_{i=2}^n(x-\lambda_i)=x^n+a_1x^{n-1}+a_2x^{n-2}+\cdots+a_n.
	\]
	If $a_1,a_2\leq0$, then $a_i\leq0:$ $i=3,4,\ldots,n$.
\end{lemma}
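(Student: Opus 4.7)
The plan is to rewrite the claim as a coefficient inequality for $g(x):=f(x)/(x-\rho)$. Since $(\lambda_2,\ldots,\lambda_n)$ is self-conjugate with nonpositive real parts, we can factor
\[
g(x)=\prod_{j=1}^s(x+\eta_j)\prod_{j=1}^m\bigl((x+\alpha_j)^2+\beta_j^2\bigr),
\]
with $\eta_j,\alpha_j\ge 0$ and $\beta_j>0$. Writing $g(x)=x^{n-1}+b_1x^{n-2}+\cdots+b_{n-1}$, the observation from the discussion of Descartes' rule in the introduction gives $b_i\ge 0$ for all $i$. Comparing coefficients in $f(x)=(x-\rho)g(x)$ then yields
\[
a_i=b_i-\rho b_{i-1}\qquad(i=1,\ldots,n;\ b_0:=1,\ b_n:=0),
\]
so $a_i\le 0$ is equivalent to $\rho b_{i-1}\ge b_i$. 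In this notation the hypotheses become $\rho\ge b_1$ and $\rho b_1\ge b_2$, and the task is to prove $\rho b_{i-1}\ge b_i$ for $i=3,\ldots,n$.

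The coefficient $a_n$ is automatic: since $b_n=0$, one has $a_n=-\rho b_{n-1}\le 0$ directly. For the middle range $3\le i\le n-1$, I would induct on the number $m$ of complex-pair factors of $g$. When $m=0$, $g$ has only real nonpositive roots, so the $b_i$ are elementary symmetric polynomials in the nonnegative reals $\{\eta_j\}$; Newton's inequalities give $b_{i-1}b_{i+1}\le b_i^2$, so the ratios $b_i/b_{i-1}$ are non-increasing and the hypothesis $\rho\ge b_2/b_1$ already suffices. For the inductive step, I would factor off one quadratic, writing $g(x)=\bigl((x+\alpha)^2+\beta^2\bigr)\tilde g(x)$ so that
\[
b_i=\tilde b_i+2\alpha\tilde b_{i-1}+(\alpha^2+\beta^2)\tilde b_{i-2},
\]
translate the conditions $\rho\ge b_1$ and $\rho b_1\ge b_2$ into conditions on $\tilde g$ and on $\alpha,\beta$, apply the inductive hypothesis to $\tilde g$, and combine to deduce $\rho b_{i-1}\ge b_i$.

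The hard part will be the inductive step in the complex-pair case: the recurrence above mixes the first two constraints with $\tilde g$'s coefficients in a nontrivial way, and for suitable $\alpha,\beta$ a ratio $b_i/b_{i-1}$ can exceed both $b_1$ and $b_2/b_1$ at intermediate indices under crude estimates. One therefore has to argue more delicately, for instance by tracking the two ``slack'' quantities $\rho-b_1$ and $\rho b_1-b_2$ and showing that they are preserved, in an appropriate sense, under multiplication by a quadratic factor. A potentially cleaner alternative is to apply the classical Hermite--Biehler Theorem directly to $g$ (which, modulo a standard continuity argument at the boundary, is Hurwitz stable): writing $g(x)=P(x^2)+xQ(x^2)$, one obtains interlacing positive real roots of $P(-x^2)$ and $Q(-x^2)$, so that $P$ and $Q$ individually satisfy Newton's inequalities. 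Combining these with the interlacing should propagate the initial two ratio bounds across the whole sequence $(b_i)$ uniformly in $i$, yielding $a_i\le 0$ for all $i\ge 3$.
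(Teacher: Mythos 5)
Your setup is right---reducing the claim to $\rho b_{i-1}\ge b_i$ with all $b_i\ge 0$, and the purely-real-roots case does follow from log-concavity of the elementary symmetric functions---but the heart of the lemma is the case with complex conjugate pairs, and that is exactly where your argument stops being a proof. The step-by-one propagation you use in the real case is unavailable in general: Newton's inequalities $b_i^2\ge b_{i-1}b_{i+1}$ fail for left-half-plane polynomials (already for $x^2+1$, where $b_1=0<b_2=1$), so the ratios $b_i/b_{i-1}$ need not be monotone, and your proposed induction on the number of quadratic factors, ``tracking the two slack quantities,'' is neither carried out nor obviously salvageable. What actually makes the argument work is a parity-mixed Newton-like inequality, $b_kb_l\ge b_{k-1}b_{l+1}$ for $k<l$ of different parity (Theorem \ref{thm:NewtonLikeInequalities} of this paper, quoted from \cite{NewtonLikeInequalities}); taking $l=k+1$ it lets you propagate in steps of two: if $a_k\le 0$ and $a_{k+2}>0$, then $b_k\le\rho b_{k-1}$ and $b_{k+2}>\rho b_{k+1}$, whence $b_kb_{k+1}\le\rho b_{k-1}b_{k+1}\le b_{k-1}b_{k+2}$, contradicting the inequality (modulo the degenerate equality cases). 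The two hypotheses $a_1\le 0$ and $a_2\le 0$ are then precisely the two seeds needed for the odd- and even-indexed chains. Your Hermite--Biehler fallback yields only log-concavity of the even and odd subsequences of $(b_i)$ separately, i.e.\ $b_{i-2}b_{i+2}\le b_i^2$, which is not the inequality required; the phrase ``should propagate'' is covering the entire missing step.

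For what it is worth, the paper does not reprove Lemma \ref{lem:LS} either---it is quoted from \cite{LaffeySmigoc}---but Section \ref{sec:StrongerLSLemma} proves strictly stronger statements: Theorem \ref{thm:LSVariant} obtains the sign pattern by combining the extended Hermite--Biehler theorem (Corollary \ref{cor:Even/OddVariant}) with Descartes' rule applied to the even and odd parts, and Theorem \ref{thm:LSVariantReformulated} runs exactly the two-step Newton-like argument sketched above. Either route would close your gap; as written, your proposal establishes the lemma only when all of $\lambda_2,\dots,\lambda_n$ are real.
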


Although Lemma \ref{lem:LS} was motivated by matrix theory, it is, fundamentally, a result on the coefficients of real polynomials. We generalise this result in Section \ref{sec:StrongerLSLemma}.

\section{The Cauchy index of a rational function}

\begin{definition}
	Let $f(x)$ be a real rational function and let $\theta,\phi\in\mathbb{R}\cup\{-\infty,\infty\}$, with $\theta<\phi$. The \emph{Cauchy index} of $f(x)$ between the limits $\theta$ and $\phi$---written $I_\theta^\phi f(x)$---is defined as the number of times $f(x)$ jumps from $-\infty$ to $\infty$, minus the number of times $f(x)$ jumps from $\infty$ to $-\infty$, as $x$ moves from $\theta$ to $\phi$.
\end{definition}

\begin{example}
	If
	\[
		f(x)=\frac{1}{(x+1)(x-1)},
	\]
	then $I_{-\infty}^0f(x)=-1$, $I_0^\infty f(x)=1$ and $I_{-\infty}^\infty f(x)=0$.
\end{example}

We introduce some additional notation: if $f(x)$ is a complex-valued function and $C$ is a contour in the complex plane, let $\Delta_Cf(x)$ denote the total increase in $\mathrm{arg}\,f(x)$ as $x$ traverses the contour $C$. If $C$ is the line segment from $\theta$ to $\phi$, then we write $\Delta_\theta^\phi f(x)$.

The following result (and its proof) essentially appears in \cite[Chapter 15, \S3]{Gantmacher}. The proof is included for completeness.

\begin{theorem}[See \cite{Gantmacher}]\label{thm:CauchyIndex}
	Let $f(x):=P(x)+iQ(x)$, where
	\[
		P(x):=x^n+a_1x^{n-1}+a_2x^{n-2}+\cdots+a_n
	\]
	and
	\[
		Q(x):=b_1x^{n-1}+b_2x^{n-2}+\cdots+b_n
	\]
	are real polynomials. Suppose $f$ has $n_+$ roots with positive imaginary part, $n_-$ roots with negative imaginary part and $n_0$ real roots ($n_++n_-+n_0=n$). Then
	\[
		I_{-\infty}^\infty\frac{Q(x)}{P(x)}=n_--n_+.
	\]
\end{theorem}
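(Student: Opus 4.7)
The plan is to evaluate $\Delta_{-\infty}^\infty \arg f(x)$ in two different ways---first from the root factorisation of $f$, and second from the identity $\arg f(x)=\arctan(Q(x)/P(x))+\text{const}$---and then equate the two expressions. As a preliminary reduction, if $\alpha\in\mathbb{R}$ is a real root of $f$ of multiplicity $k$, then $(x-\alpha)^k$ divides $f(x)=P(x)+iQ(x)$, and since this factor is real, it must divide $P$ and $Q$ separately. Writing $P=(x-\alpha)^k\tilde P$ and $Q=(x-\alpha)^k\tilde Q$, one has $Q/P=\tilde Q/\tilde P$, so the Cauchy index is unchanged, while $\tilde f:=\tilde P+i\tilde Q$ has the same $n_+$ and $n_-$ but one fewer real root. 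Iterating, we may assume $f$ has no real roots.

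For the first computation, I would read $\Delta_{-\infty}^\infty \arg f$ directly from the factorisation $f(x)=\prod_{k=1}^{n}(x-z_k)$. For a root $z_k=\alpha_k+i\beta_k$ with $\beta_k>0$, the map $x\mapsto x-z_k$ traces a horizontal line in the lower half-plane at height $-\beta_k$, and its continuous argument sweeps from $-\pi$ (as $x\to-\infty$) through $-\pi/2$ (at $x=\alpha_k$) to $0$ (as $x\to+\infty$), contributing a net change of $+\pi$. Symmetrically, a root with $\beta_k<0$ contributes $-\pi$. Summing over all roots gives $\Delta_{-\infty}^\infty \arg f(x)=(n_+-n_-)\pi$.

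For the second computation, note that on each maximal open interval on which $P(x)\neq 0$ one has $\arg f(x)=\arctan(Q(x)/P(x))+c$, where $c$ is locally constant. Since $f$ has no real zeros, $\arg f$ is globally continuous, so $c$ must jump to compensate for the discontinuities of $\arctan(Q/P)$ at the real zeros of $P$. A $-\infty\to+\infty$ transition of $Q/P$ makes $\arctan(Q/P)$ jump by $+\pi$, forcing $c$ to decrease by $\pi$; a $+\infty\to-\infty$ transition forces $c$ to increase by $\pi$. Because $\deg Q<\deg P$, $Q/P\to 0$ as $x\to\pm\infty$, so $\arctan(Q/P)$ contributes nothing at the endpoints and $\Delta_{-\infty}^\infty \arg f(x)=c(\infty)-c(-\infty)=-\pi\,I_{-\infty}^\infty(Q/P)$. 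Comparing with the previous expression yields $I_{-\infty}^\infty(Q/P)=n_--n_+$.

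The main technical obstacle is the careful bookkeeping of argument branches, particularly when $P$ has real zeros of multiplicity greater than one (which, after the reduction, cannot coincide with zeros of $Q$). At an even-order such zero, $Q/P$ does not change sign, contributing $0$ to the Cauchy index, while $f=P+iQ$ stays on one side of the real axis, so $\arg f$ undergoes no net change either. At odd-order zeros of $P$ the analysis reduces to the simple-pole case already handled. Once these cases are verified, both expressions for $\Delta_{-\infty}^\infty \arg f(x)$ are confirmed and the theorem follows.
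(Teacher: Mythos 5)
Your proof is correct and follows essentially the same strategy as the paper: both compute $\Delta_{-\infty}^{\infty}\arg f$ once from the root factorisation and once via the identity $\Delta_{-\infty}^{\infty}\arg f=-\pi I_{-\infty}^{\infty}(Q/P)$, and both dispose of the real roots of $f$ by factoring them out of $P$ and $Q$ simultaneously, noting that this leaves the Cauchy index unchanged. The only cosmetic difference is that you evaluate the argument sweep of each linear factor directly along the real axis (getting $\pm\pi$ per factor), whereas the paper closes the path with a large semicircle and subtracts its limiting contribution $n\pi$ from the total winding $2n_+\pi$ around the closed contour.
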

\begin{proof}
	We first consider the case when $n_0=0$. Define the closed contour $C=C_1+C_2$ (shown in Figure \ref{fig:Contour}), where $C_1$ is the line segment from $-R$ to $R$ and $C_2$ is the semicircle
	\[
		x(t)=Re^{it} \hspace{2mm} : \hspace{10mm} 0\leq t\leq\pi.
	\]
	Assume $R$ is large enough so that all of the roots of $f$ with positive imaginary part lie within the region enclosed by $C$.
	
\tikzset{->-/.style={decoration={
  markings,
  mark=at position #1 with {\arrow{>}}},postaction={decorate}}}

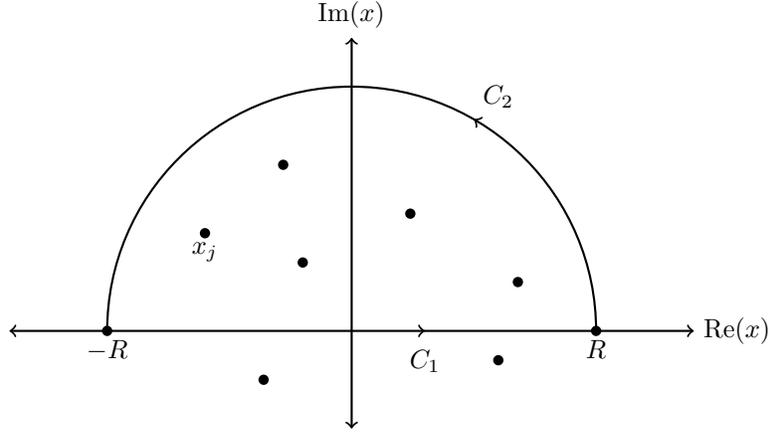
\begin{figure}[h!]
	\centering
	\begin{tikzpicture}[scale=1.3]
		\draw [<->,thick] (0,-1) -- (0,3) node [above] {$\mathrm{Im}(x)$};
		\draw [->-=.3,thick] (0,0) -- (2.5,0) node {};
		\draw [->,thick] (2.5,0) -- (3.5,0) node [right] {$\mathrm{Re}(x)$};
		\draw (1.5,2.4) node {$C_2$};
		\draw (.3*2.5,-0.1) node [below] {$C_1$};
		\draw [->,thick] (0,0) -- (-3.5,0) node [right] {};
		\draw [->-=1/3,thick] (2.5,0) arc (0:180:2.5);
		\fill (-2.5,0) circle [radius=1.5pt];
		\draw (-2.5,0) node [below] {$-R$};
		\fill (2.5,0) circle [radius=1.5pt];
		\draw (2.5,0) node [below] {$R$};
		\fill (-1.5,1) circle [radius=1.5pt];
		\draw (-1.5,1) node [below] {$x_j$};
		\fill (-0.7,1.7) circle [radius=1.5pt];
		\fill (-0.5,0.7) circle [radius=1.5pt];
		\fill (0.6,1.2) circle [radius=1.5pt];
		\fill (1.7,0.5) circle [radius=1.5pt];
		\fill (-0.9,-0.5) circle [radius=1.5pt];
		\fill (1.5,-0.3) circle [radius=1.5pt];
	\end{tikzpicture}
	\caption{Contours $C_1$ and $C_2$}
	\label{fig:Contour}
\end{figure}
		
	Denote the roots of $f$ by $x_1,x_2,\ldots,x_n$. For each $j=1,2,\ldots,n$, if $\mathrm{Re}(x_j)>0$, then $\Delta_C(x-x_j)=2\pi$. Otherwise, $\Delta_C(x-x_j)=0$. Therefore
	\[
		\Delta_Cf(x)=\Delta_C\left((a_0+ib_0)\prod_{j=1}^n(x-x_j)\right)=\sum_{j=1}^n\Delta_C(x-x_j)=2n_+\pi.
	\]
	Similarly,
	\[
		\lim_{R\rightarrow\infty}\Delta_{C_2}f(x)=n\pi.
	\]
	Hence
	\begin{equation}\label{eq:IProof1}
		\Delta_{-\infty}^\infty f(x)=(2n_+-n)\pi;
	\end{equation}
	however, since
	\[
		\mathrm{arg}\:f(x)=\tan^{-1}\frac{Q(x)}{P(x)}
	\]
	and
	\[
		\lim_{x\rightarrow\pm\infty}\frac{Q(x)}{P(x)}=0,
	\]
	it follows that
	\begin{equation}\label{eq:IProof2}
		\frac{1}{\pi}\Delta_{-\infty}^\infty f(x)=-I_{-\infty}^\infty\frac{Q(x)}{P(x)}.
	\end{equation}
	Combining (\ref{eq:IProof1}) and (\ref{eq:IProof2}) gives
	\[
		I_{-\infty}^\infty\frac{Q(x)}{P(x)}=n-2n_+=n_--n_+,
	\]
	as required.
	
	Now consider the case when $n_0>0$. Let us label the real roots of $f$ as $\eta_1,\eta_2,\ldots,$ $\eta_{n_0}$.
	
	Writing
	\begin{align*}
		f(x)&=\left( \prod_{j=1}^{n_0}(x-\eta_j) \right)\tilde{f}(x),\\
		\tilde{f}(x)&=\tilde{P}(x)+i\tilde{Q}(x),
	\end{align*}
	we note that the polynomial $\tilde{f}$ has $n_+$ roots with positive imaginary part, $n_-$ roots with negative imaginary part and no real roots. Hence, from the above,
	\[
		I_{-\infty}^\infty\frac{\tilde{Q}(x)}{\tilde{P}(x)}=n_--n_+.
	\]
	We note, however, that
	\begin{align*}
		P(x)&=\left( \prod_{j=1}^{n_0}(x-\eta_j) \right)\tilde{P}(x),\\
		Q(x)&=\left( \prod_{j=1}^{n_0}(x-\eta_j) \right)\tilde{Q}(x)
	\end{align*}
	and for all $j=1,2,\ldots,n_0$,
	\[
		\lim_{x\rightarrow \eta_j}\frac{Q(x)}{P(x)}=\lim_{x\rightarrow \eta_j}\frac{\tilde{Q}(x)}{\tilde{P}(x)}.
	\]
	Therefore
	\[
		I_{-\infty}^\infty\frac{Q(x)}{P(x)}=I_{-\infty}^\infty\frac{\tilde{Q}(x)}{\tilde{P}(x)}.
	\]
\end{proof}


\section{An extension of the Hermite-Biehler Theorem}\label{sec:dRealRoots}

In this section, we consider an arbitrary real polynomial $f(x)=p(x^2)+xq(x^2)$, with $n_-$ roots in the left half-plane and $n_+$ roots in the right half-plane. We extend the Hermite-Biehler Theorem by giving a lower bound on the number of real roots of $p(-x^2)$ and $q(-x^2)$ in terms of $|n_--n_+|$ and showing that these real roots interlace.

\begin{definition}
	Let $\mathcal{X}$ and $\mathcal{Z}$ be sequences of real numbers. We say $\mathcal{X}$ and $\mathcal{Z}$ \emph{interlace} if the following two conditions hold:
	\begin{enumerate}[(i)]
		\item if $x_i$ and $x_j$ are two distinct elements of $\mathcal{X}$ with $x_i<x_j$, then there exists an element $z_k$ of $\mathcal{Z}$ such that $x_i\leq z_k\leq x_j$ (and vice versa);
		\item if $x_i$ appears in $\mathcal{X}$ with multiplicity $m$, then $x_i$ appears in $\mathcal{Z}$ with multiplicity at least $m-1$ (and vice versa).
	\end{enumerate}
	We say $\mathcal{X}$ and $\mathcal{Z}$ \emph{strictly interlace} if every element of $\mathcal{X}$ and $\mathcal{Z}$ occurs with multiplicity 1, $\mathcal{X}$ and $\mathcal{Z}$ have no element in common and whenever $x_i$ and $x_j$ are two distinct elements of $\mathcal{X}$ with $x_i<x_j$, there exists an element $z_k$ of $\mathcal{Z}$ such that $x_i<z_k<x_j$ (and vice versa).
\end{definition}

Before considering the real polynomial $f(x)=p(x^2)+xq(x^2)$, it is easier (and more general) to first consider the complex polynomial $f(x):=P(x)+iQ(x)$.

\begin{theorem}\label{thm:Real/ImaginaryPartsVariant}
	Consider the polynomial
	\begin{equation*}
		f(x):=P(x)+iQ(x),
	\end{equation*}
	where
	\begin{align*}
		P(x)&:=x^n+a_1x^{n-1}+a_2x^{n-2}+\cdots+a_n, \\
		Q(x)&:=b_1x^{n-1}+b_2x^{n-2}+\cdots+b_n
	\end{align*}
	and the $a_i$ and $b_i$ are real. Suppose $f$ has $n_+$ roots with positive imaginary part, $n_-$ roots with negative imaginary part and $n_0<n$ real roots $(n_++n_-+n_0=n)$. If $d:=n-2\min\{n_+,n_-\}$, then (counting multiplicities) there exist at least $d$ real roots of $P$ (say $\mu_1,\mu_2,\ldots,\mu_d$) and at least $d-1$ real roots of $Q$ (say $\nu_1,\nu_2,\ldots,\nu_{d-1}$) such that
	\begin{equation}\label{eq:VariantInterlacing}
		\mu_1\leq\nu_1\leq\mu_2\leq\nu_2\leq\cdots\leq\nu_{d-1}\leq\mu_d.
	\end{equation}
	If $n_0=0$, then the inequalities in (\ref{eq:VariantInterlacing}) may be assumed to be strict.
\end{theorem}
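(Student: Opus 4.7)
The plan is to reduce to the case $n_0 = 0$ and then analyse the continuous argument function $\theta(x) := \arg f(x)$ on $\mathbb{R}$. Every real root of $f$ is automatically a common real root of $P$ and $Q$ (both must vanish there, being real-valued), so I may write $f(x) = \prod_{j=1}^{n_0}(x-\eta_j)\,\tilde{f}(x)$, where the $\eta_j$ are the real roots of $f$ listed with multiplicity and $\tilde{f} = \tilde{P} + i\tilde{Q}$ has no real roots; the counts $n_\pm$ are unchanged, so the reduced parameter becomes $\tilde{d} = d - n_0$. Granted strict interlacing for $\tilde{f}$---yielding $\tilde{d}$ distinct real roots of $\tilde{P}$ strictly interlaced by $\tilde{d} - 1$ distinct real roots of $\tilde{Q}$---each $\eta_j$ will be spliced into this strict chain, contributing once to the $\mu$-list (as a real root of $P$) and once to the $\nu$-list (as a real root of $Q$), producing a weakly interlaced sequence of the required length $2d - 1$ for $f$.

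For the core case $n_0 = 0$, $\theta$ is continuous on $\mathbb{R}$ since $f$ has no real zeros, and $\theta(\pm\infty) \in \pi\mathbb{Z}$ since $f(x) \sim x^n$ is real at infinity. By the argument in the proof of Theorem \ref{thm:CauchyIndex}, $\theta(+\infty) - \theta(-\infty) = (2n_+ - n)\pi$. Assuming without loss of generality that $n_+ \leq n_-$, this change equals $-d\pi$; writing $A := \theta(-\infty)$ gives $\theta(+\infty) = A - d\pi$.

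The construction is then a repeated intermediate value theorem argument at equally spaced target levels. Set $y_j := A - j\pi/2$ for $j = 1, 2, \ldots, 2d - 1$; these values lie strictly between $\theta(+\infty)$ and $\theta(-\infty)$. Pick $x_1$ with $\theta(x_1) = y_1$ by IVT, and inductively choose $x_{j+1} \in (x_j, \infty)$ with $\theta(x_{j+1}) = y_{j+1}$; such $x_{j+1}$ exists by IVT since $\theta(x_j) = y_j > y_{j+1} > \theta(+\infty)$. Because $A \in \pi\mathbb{Z}$, the odd-indexed $y_{2k-1} \equiv \pi/2 \pmod{\pi}$ forces $P(x_{2k-1}) = 0$ while $Q(x_{2k-1}) \neq 0$, and the even-indexed $y_{2k} \equiv 0 \pmod{\pi}$ forces $Q(x_{2k}) = 0$ while $P(x_{2k}) \neq 0$. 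Setting $\mu_k := x_{2k-1}$ for $k = 1, \ldots, d$ and $\nu_k := x_{2k}$ for $k = 1, \ldots, d-1$ yields $d$ distinct real roots of $P$ strictly interlaced with $d - 1$ distinct real roots of $Q$.

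The main conceptual step is converting the algebraic Cauchy-index data from Theorem \ref{thm:CauchyIndex} into a topological statement about the continuous real-valued function $\theta$; once this is in place the construction is essentially mechanical. The remaining subtlety, which I expect to be the main bookkeeping obstacle, is the precise arrangement in the general ($n_0 > 0$) case: the merged list $\{\tilde{\mu}_k\} \cup \{\tilde{\nu}_k\} \cup \{\eta_j\}$ must be sorted and assigned into an alternating $\mu, \nu, \mu, \ldots, \mu$ pattern of length $2d - 1$, even when some $\eta_j$ coincides with a $\tilde{\mu}_k$ or $\tilde{\nu}_k$; this careful bookkeeping is exactly why the interlacing in the general case is only weak, and not strict.
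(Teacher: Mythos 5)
Your proof is correct, and it takes a genuinely different route through the same underlying fact. The paper works with the rational function $Q(x)/P(x)$: it classifies its sign-change poles into the points $p_1<\cdots<p_s$ (jumps from $-\infty$ to $\infty$) and $q_1<\cdots<q_{s'}$ (jumps from $\infty$ to $-\infty$), locates a zero of $Q$ between consecutive $p_i$'s by a sign argument, and then obtains the length bound $2(s-h)-1\geq 2(s-s')-1=2I_{-\infty}^{\infty}(Q/P)-1$ after a deletion step that repairs the non-alternating pairs; the count is closed using the Cauchy index statement of Theorem \ref{thm:CauchyIndex}. You instead lift $\arg f$ to a continuous function $\theta$ on $\mathbb{R}$ (legitimate once $n_0=0$, since $f$ is then nonvanishing on $\mathbb{R}$ and tends to a real direction at $\pm\infty$), import only the winding computation $\Delta_{-\infty}^{\infty}f=(2n_+-n)\pi$ from the \emph{proof} of Theorem \ref{thm:CauchyIndex} rather than its statement, and extract the alternating chain by the intermediate value theorem at the levels $A-j\pi/2$, reading off $P=0$ at the half-integer multiples of $\pi$ and $Q=0$ at the integer multiples. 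This buys you a strictly increasing, automatically alternating sequence with no analogue of the $\mathcal{T}\to\mathcal{T}'$ deletion or the bookkeeping with $h$, $s$, $s'$, at the cost of having to justify the continuous branch of the argument and its limits at $\pm\infty$ (which you do, correctly). The $n_0>0$ reduction is identical to the paper's, including the weak-interlacing splice of the common real roots $\eta_j$ into both lists; your closing remark about sorting the merged list is exactly the point the paper also passes over lightly, and it does work out since each $\eta_j$ can be inserted as an adjacent equal $(\nu,\mu)$ or $(\mu,\nu)$ pair without breaking the alternation.
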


\begin{proof}
	As in the proof of Theorem \ref{thm:CauchyIndex}, we first consider the case when $n_0=0$. In this case, $P$ and $Q$ can have no real root in common, since if $x_0$ were a real root of both $P$ and $Q$, then $x_0$ would also be a real root of $f$. Suppose also that $n_->n_+$.
	
	Let $p_1<p_2<\cdots<p_s$ be the points on the real line at which $Q(x)/P(x)$ jumps from $-\infty$ to $\infty$ and let $q_1<q_2<\cdots<q_{s'}$ be the points on the real line at which $Q(x)/P(x)$ jumps from $\infty$ to $-\infty$. Clearly, the $p_i$ and $q_i$ are roots of $P$. Suppose they are arranged as follows:
	\begin{gather*}
		\cdots<p_{k_j}< p_{k_j+1}<\cdots< p_{k_{j+1}-1}\\
		<q_{l_j}< q_{l_j+1}<\cdots< q_{l_{j+1}-1}\\
		<p_{k_{j+1}}< p_{k_{j+1}+1}<\cdots< p_{k_{j+2}-1}<\cdots.
	\end{gather*}
	
	Now consider the interval $R:=(p_{k_j+r-1},p_{k_j+r})$, where $1\leq r\leq k_{j+1}-k_j-1$. By definition of the $p_i$,
	\[
		\lim_{x\rightarrow p_{r-1+k_j}^+}\frac{Q(x)}{P(x)}=\infty, \hspace{10mm}
		\lim_{x\rightarrow p_{r+k_j}^-}\frac{Q(x)}{P(x)}=-\infty.
	\]
	Furthermore, although $Q(x)/P(x)$ may have discontinuities in $R$ (at points where $P$ has a root of even multiplicity), $Q(x)/P(x)$ does not change sign at these discontinuities. Hence $Q(x)/P(x)$ has a root, say $w_{jr}$, in $R$. Obviously, $w_{jr}$ is also a root of $Q$.
	
	Let us now consider the sequence
	\begin{multline*}
		\mathcal{T}:=(\,\ldots,p_{k_j},w_{j1}, p_{k_j+1},w_{j2},\ldots,p_{k_{j+1}-1},\\
		p_{k_{j+1}},w_{j+1,1}, p_{k_{j+1}+1},w_{j+1,2},\ldots,p_{k_{j+2}-1},\ldots\,).
	\end{multline*}
	This sequence consists of strictly interlacing roots of $P$ and $Q$, apart from certain pairs of adjacent roots of $P$ of the form $(p_{k_{j+1}-1},p_{k_{j+1}})$. Hence, we form a new sequence $\mathcal{T}'$ from $\mathcal{T}$ by deleting either $p_{k_{j+1}-1}$ or $p_{k_{j+1}}$ for each $j$. Since $\mathcal{T}'$ is a strictly interlacing sequence of real roots of $P$ and $Q$, whose first and last entries are roots of $P$, it is sufficient to check that $\mathcal{T}'$ is sufficiently long.
	
	Let $h$ be the number of subsequences $(q_{l_j}< q_{l_j+1}<\cdots< q_{l_{j+1}-1})$ which lie between $p_1$ and $p_s$. We note that $\mathcal{T}$ has length $2s-h-1$. Since $\mathcal{T}'$ was formed by deleting $h$ elements from $\mathcal{T}$, it follows that $\mathcal{T}'$ has length
	\[
		2(s-h)-1\geq2(s-s')-1=2I_{-\infty}^\infty\frac{Q(x)}{P(x)}-1.
	\]
	By Theorem \ref{thm:CauchyIndex}, it follows that $\mathcal{T}'$ has at least
	\[
		2(n_--n_+)-1=2(n-2n_+)-1=2d-1
	\]
	elements, as required.
	
	We have yet to consider $n_+\geq n_-$ or $n_0>0$. If $n_0=0$ and $n_+=n_-$, then the statement says nothing; hence we may ignore this case. If $n_0=0$ and $n_+>n_-$, then the proof is analogous to the above.
	
	Finally, suppose $n_0>0$. Let us label the real roots of $f$ as $\eta_1,\eta_2,\ldots,$ $\eta_{n_0}$. Writing
	\[
		f(x)=\left( \prod_{j=1}^{n_0}(x-\eta_j) \right)\left( \tilde{P}(x)+i\tilde{Q}(x) \right),
	\]
	we note that the polynomial $\tilde{P}(x)+i\tilde{Q}(x)$ has $n_+$ roots with positive imaginary part, $n_-$ roots with negative imaginary part and no real roots. Hence, from the above, there exist $d-n_0$ real roots of $\tilde{P}$ (say $\mu_1,\mu_2,\ldots,\mu_{d-n_0}$) and $d-n_0-1$ real roots of $\tilde{Q}$ (say $\nu_1,\nu_2,$ $\ldots,\nu_{d-n_0-1}$) such that
	\[
		\mu_1<\nu_1<\mu_2<\nu_2<\cdots<\nu_{d-n_0-1}<\mu_{d-n_0}.
	\]
	All that remains is to note that the sequences
	\[
		(\mu_1,\mu_2,\ldots,\mu_{d-n_0},\eta_1,\eta_2,\ldots,\eta_{n_0})
	\]
	and
	\[
		(\nu_1,\nu_2,\ldots,\nu_{d-n_0-1},\eta_1,\eta_2,\ldots,\eta_{n_0})
	\]
	interlace (though not strictly).
\end{proof}

As a consequence of Theorem \ref{thm:Real/ImaginaryPartsVariant}, we obtain the following extension of the Hermite-Biehler theorem:

\begin{corollary}\label{cor:Even/OddVariant}
	Consider the real polynomial
	\[
		f(x):=x^n+a_1x^{n-1}+a_2x^{n-2}+\cdots+a_n.
	\]
	Suppose $f$ has $n_+$ roots with positive real part, $n_-$ roots with negative real part and $n_0<n$ purely imaginary roots $(n_++n_-+n_0=n)$. Let
	\begin{align}
		P(x)&:=x^n-a_2x^{n-2}+a_4x^{n-4}-\cdots,\notag \\
		Q(x)&:=a_1x^{n-1}-a_3x^{n-3}+a_5x^{n-5}-\cdots \label{eq:QPolynomial}
	\end{align}
	and $d:=n-2\min(n_+,n_-)$. Then (counting multiplicities) there exist at least $d$ real roots of $P$ (say $\mu_1,\mu_2,\ldots,\mu_d$) and at least $d-1$ real roots of $Q$ (say $\nu_1,\nu_2,\ldots,\nu_{d-1}$) such that
	\begin{equation}\label{eq:VariantInterlacing'}
		\mu_1\leq\nu_1\leq\mu_2\leq\nu_2\leq\cdots\leq\nu_{d-1}\leq\mu_d.
	\end{equation}
	If $n_0=0$, then the inequalities in (\ref{eq:VariantInterlacing'}) may be assumed to be strict.
\end{corollary}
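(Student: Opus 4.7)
The plan is to reduce Corollary~\ref{cor:Even/OddVariant} to Theorem~\ref{thm:Real/ImaginaryPartsVariant} via the substitution $x \mapsto ix$, which rotates the complex plane by a quarter turn and so interchanges the roles of ``real part'' and ``imaginary part''. Specifically, I would set $g(x) := i^{-n} f(ix)$ and verify by a direct expansion of $\sum a_k (ix)^{n-k}$, followed by normalization, that $g(x) = P(x) - iQ(x)$, with $P$ and $Q$ exactly the polynomials from the statement. Writing $\tilde Q := -Q$, I then have $g = P + i\tilde Q$ with $P$ monic of degree $n$ and $\tilde Q$ a real polynomial of degree at most $n-1$, so $g$ fits the hypotheses of Theorem~\ref{thm:Real/ImaginaryPartsVariant}.

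Next, I would match up the root counts. Since $x_0$ is a root of $g$ exactly when $ix_0$ is a root of $f$, and multiplication by $i$ sends the right half-plane to the lower half-plane, the left half-plane to the upper half-plane, and the imaginary axis to the real axis, I conclude that $g$ has $n_-$ roots with positive imaginary part, $n_+$ roots with negative imaginary part, and $n_0$ real roots. Consequently, the invariant $n-2\min(n_+(g),n_-(g))$ supplied to Theorem~\ref{thm:Real/ImaginaryPartsVariant} is exactly the $d$ defined in our statement.

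Applying Theorem~\ref{thm:Real/ImaginaryPartsVariant} to $g$ then produces at least $d$ real roots of $P$ and at least $d-1$ real roots of $\tilde Q$ satisfying the required interlacing. Since the real roots of $\tilde Q = -Q$ are precisely the real roots of $Q$, this yields the chain (\ref{eq:VariantInterlacing'}), and the strictness clause when $n_0 = 0$ transfers verbatim from the theorem. I do not anticipate any substantive obstacle here: the only thing to be careful about is the sign bookkeeping in the expansion of $i^{-n} f(ix)$, and the (trivial) observation that swapping $Q$ for $-Q$ is harmless because the two polynomials share the same real roots with the same multiplicities.
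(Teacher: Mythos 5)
Your proposal is correct and takes essentially the same approach as the paper, which likewise reduces the corollary to Theorem \ref{thm:Real/ImaginaryPartsVariant} by a quarter-turn substitution (the paper uses $g(x)=i^nf(-ix)=P(x)+iQ(x)$ rather than your $i^{-n}f(ix)=P(x)-iQ(x)$, an immaterial difference for exactly the reasons you give). One small slip in your wording: multiplication by $i$ sends the right half-plane to the \emph{upper} half-plane; it is the map $y\mapsto -iy$ from roots of $f$ to roots of $g$ that sends right to lower and left to upper, though the root counts you then state for $g$ are nonetheless correct.
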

\begin{proof}
	The real parts of the roots of $f$ correspond to the imaginary parts of the roots of the polynomial
	\[
		g(x):=i^nf(-ix)=x^n+ia_1x^{n-1}-a_2x^{n-2}-ia_3x^{n-3}+\cdots.
	\]
	The result follows from Theorem \ref{thm:Real/ImaginaryPartsVariant}.
\end{proof}

Note that the bounds given for the number of real roots of $P$ and $Q$ in Corollary \ref{cor:Even/OddVariant} may or may not be achieved, as illustrated by the following two examples:


\begin{example}
	The polynomial
	\[
		f(x):=x^5-x^4+3 x^3-4 x+1
	\]
	has $n_+=4$ roots with positive real part and $n_-=1$ root with negative real part, so that, in the notation of Corollary \ref{cor:Even/OddVariant}, $d=3$. The polynomial $P(x):=x^5-3 x^3-4 x$ has roots $-2,0,2,i,-i$ and the polynomial $Q(x)=-x^4+1$ has roots $-1,1,i,-i$. Hence, in this example, the bounds given in Corollary \ref{cor:Even/OddVariant} on the numbers of real roots of $P$ and $Q$ is achieved.
\end{example}

\begin{example}
	Consider the polynomial
	\[
		f(x):=x^4+2 x^3+23 x^2+94 x+130,
	\]
	with roots $1\pm5i,-2\pm i$. In the notation of Corollary \ref{cor:Even/OddVariant}, $n_+=n_-=2$ and $d=0$. Hence, the corollary does not guarantee the existence of any real roots of the polynomials
	\[
		P(x):=x^4-23 x^2+130
	\]
	or
	\[
		Q(x):=2 x^3-94 x;
	\]
	however, $P$ has roots $-\sqrt{13},-\sqrt{10},\sqrt{10},\sqrt{13}$ and $Q$ has roots $-\sqrt{47},0,\sqrt{47}$.
\end{example}

It turns out that, under certain circumstances, we can infer the existence of an additional two real roots of the polynomial $Q$ given in (\ref{eq:QPolynomial}). We will use these additional roots in the next section.

\begin{observation}\label{obv:TwoMoreRoots}
	Assume the hypotheses and conclusion of Theorem \ref{thm:Real/ImaginaryPartsVariant} (alternatively Corollary \ref{cor:Even/OddVariant}).
	\begin{enumerate}
		\item[\textup{(i)}] If $n_->n_+$ and $\lim_{x\rightarrow-\infty}(P(x)/Q(x))=\infty$, or alternatively if $n_-<n_+$ and $\lim_{x\rightarrow-\infty}(P(x)/Q(x))=-\infty$, then there exists an additional real root $\nu_0$ of $Q$ such that $\nu_0\leq\mu_1$.
		\item[\textup{(ii)}] If $n_->n_+$ and $\lim_{x\rightarrow\infty}(P(x)/Q(x))=-\infty$, or alternatively if $n_-<n_+$ and $\lim_{x\rightarrow\infty}(P(x)/Q(x))=\infty$, then there exists an additional real root $\nu_d$ of $Q$ such that $\nu_d\geq\mu_d$.
	\end{enumerate}
	If $n_0=0$, then $\nu_0<\mu_1$ and $\nu_d>\mu_d$.
\end{observation}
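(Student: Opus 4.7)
My strategy is to mirror the construction of the interlacing sequence $\mathcal{T}'$ used in the proof of Theorem~\ref{thm:Real/ImaginaryPartsVariant} and extend it by one element at the appropriate end, using the asymptotic sign of $Q(x)/P(x)$ at $\pm\infty$ that is encoded in the hypothesis on $P(x)/Q(x)$. I shall first prove case~(i), first sub-case, in which $n_->n_+$ and $\lim_{x\to-\infty}P(x)/Q(x)=+\infty$; the second sub-case of~(i) will follow by replacing $f$ with $\overline{f}(x)=P(x)-iQ(x)$, which interchanges $n_+$ and $n_-$ while negating $Q/P$, and case~(ii) will follow from the substitution $x\mapsto -x$, which swaps the behaviour at the two ends of the real line.

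The hypothesis $P/Q\to+\infty$ at $-\infty$ is equivalent to $Q/P\to 0^+$ as $x\to-\infty$, so $Q/P$ is strictly positive for $x$ sufficiently large negative. The central sign-change argument will run as follows. Just to the left of the leftmost $p$-type jump $p_1=\mu_1$, $Q/P\to-\infty$ by definition, so the sign of $Q/P$ at $p_1^-$ is negative. Because $Q/P$ is continuous on $(-\infty,p_1)$ away from the $q$-type jumps contained in that interval (which flip its sign through $\pm\infty$), the transition from sign $+$ at $-\infty$ to sign $-$ at $p_1^-$ must be effected either by a real zero of $Q$ lying in $(-\infty,p_1)$ or by a $q$-type jump inside $(-\infty,p_1)$. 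When a zero of $Q$ is responsible, this zero is the desired $\nu_0$ and automatically satisfies $\nu_0<p_1=\mu_1$.

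The main obstacle is the configuration in which every sign change in $(-\infty,p_1)$ is accounted for by $q$-type jumps rather than by zeros of $Q$. To deal with this, I will observe that between any two consecutive $q$-type jumps $Q/P$ must pass continuously from $-\infty$ to $+\infty$, which forces a real zero of $Q$ in the intervening gap; thus a $q$-run of length at least two to the left of $p_1$ already produces $\nu_0$. Only the degenerate case of a single isolated $q_1<p_1$ requires further care: in this case the outlying $q$-run contributes to $s$ but not to $h$ in the proof of Theorem~\ref{thm:Real/ImaginaryPartsVariant}, so $\mathcal{T}'$ has length $2(s-h)-1>2d-1$, containing more real roots of $P$ and $Q$ than the minimum guaranteed by the theorem. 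I will then re-index the interlacing supplied by Theorem~\ref{thm:Real/ImaginaryPartsVariant}, taking $\mu_1$ to be a $p$-type root of $P$ further to the right in $\mathcal{T}'$ and letting $\nu_0$ be the additional real root of $Q$ that this re-indexing frees up on the left.

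Finally, the strict inequalities $\nu_0<\mu_1$ and $\nu_d>\mu_d$ under the hypothesis $n_0=0$ will follow from the observation that, in this situation, $P$ and $Q$ share no common real root: such a common root would produce a real root of $f$, contradicting $n_0=0$.
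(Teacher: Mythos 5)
Your proposal follows the same basic strategy as the paper's proof: convert the hypothesis on $P/Q$ at $-\infty$ into the statement that $Q/P\to0^+$ there, compare with $Q(x)/P(x)\to-\infty$ as $x\to p_1^-$, and extract an extra zero of $Q/P$ to the left of the interlacing sequence, with the remaining sub-cases reduced to this one by conjugation and $x\mapsto-x$. However, your treatment is genuinely more complete than the paper's. The paper simply asserts that the sign change of $Q/P$ on $(-\infty,p_1)$ yields a root $w_0<p_1$ of $Q$, which tacitly assumes there is no sign-reversing ($q$-type) pole of $Q/P$ to the left of $p_1$; you correctly identify that such a pole can occur (e.g.\ $P(x)=(x+3)(x+1)(x-2)$, $Q(x)=x-1$, where $n_-=2$, $n_+=1$, the only real root of $Q$ lies to the right of the two smallest roots of $P$, and the statement only holds after re-choosing $\mu_1$). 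Your resolution --- a $q$-run of length at least two forces a zero of $Q$ between consecutive $q$-type jumps, while an isolated outlying $q_1$ makes $h\leq s'-1$, so $\mathcal{T}'$ has length at least $2d+1$ and one can re-index to absorb the surplus root of $Q$ as $\nu_0$ --- is exactly the right repair and is consistent with the fact that Theorem \ref{thm:Real/ImaginaryPartsVariant} asserts only the existence of \emph{some} interlacing choice of the $\mu_i$ and $\nu_i$. Two small points: you write that the outlying $q$-run ``contributes to $s$ but not to $h$''; it contributes to $s'$, not $s$. And, like the paper, you do not explicitly carry the argument through the $n_0>0$ case (where the real roots $\eta_j$ are appended to both sequences and the inequalities become non-strict); this should be noted, though it is routine since $Q/P=\tilde Q/\tilde P$ after cancelling the common factor.
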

\begin{proof}
	Assume the hypotheses and conclusion of Theorem \ref{thm:Real/ImaginaryPartsVariant} (those of Corollary \ref{cor:Even/OddVariant} are equivalent). First suppose $n_->n_+$ and
	\begin{equation}\label{eq:LimitInObvProof}
		\lim_{x\rightarrow-\infty}\frac{P(x)}{Q(x)}=\infty.
	\end{equation}
	In the proof of Theorem \ref{thm:Real/ImaginaryPartsVariant}, the first element $p_1$ of $\mathcal{T}'$ was chosen such that
	\[
		\lim_{x\rightarrow p_1^-}\frac{Q(x)}{P(x)}=-\infty.
	\]
	Hence, in this case, (\ref{eq:LimitInObvProof}) implies the existence of an additional real root $w_0$ of $Q(x)/P(x)$ such that $w_0<p_1$. It follows that there exists an additional real root $\nu_0$ of $Q$ such that $\nu_0\leq\mu_1$.
	
	The remaining cases are dealt with similarly.
\end{proof}

\section{Polynomials with one positive root}\label{sec:StrongerLSLemma}

Using our extension of the Hermite-Biehler theorem, it will now be possible give an inverse rule of signs for real polynomials with one positive root. Later (in Theorem \ref{thm:LSVariantReformulated}), we will show how this rule can be somewhat simplified, under some minor additional assumptions.

\begin{theorem}\label{thm:LSVariant}
	Consider the real polynomial
	\[
		f(x):=x^n+a_1x^{n-1}+a_2x^{n-2}+\cdots+a_n.
	\]
	Suppose $f$ has roots $r,x_2,x_3,\ldots,x_n$, where $r$ is real and $\mathrm{Re}(x_j)\leq0:$ $j=2,3,\ldots,n$. Then the sequence $a_1,a_2,\ldots,a_n$ satisfies the following conditions:
	\begin{enumerate}
		\item[\textup{(i)}] Let $t$ be the largest integer such that $a_{2t}\neq0$. Then either $a_{2j}>0$ for all $j=1,2,\ldots,t$, or there exists $s\in\{1,2,\ldots,t\}$ such that
		\begin{align*}
			a_{2j}&>0 \hspace{2mm} : \hspace{6mm} j=1,2,\ldots,s-1, \\
			a_{2s}&\leq0, \\
			a_{2j}&<0 \hspace{2mm} : \hspace{6mm} j=s+1,s+2,\ldots,t.
		\end{align*}
		\item[\textup{(ii)}] Let $t'$ be the largest integer such that $a_{2t'-1}\neq0$. Then either $a_{2j-1}>0$ for all $j=1,2,\ldots,t'$, or there exists $s'\in\{1,2,\ldots,t'\}$ such that
		\begin{align*}
			a_{2j-1}&>0 \hspace{2mm} : \hspace{6mm} j=1,2,\ldots,s'-1, \\
			a_{2s'-1}&\leq0, \\
			a_{2j-1}&<0 \hspace{2mm} : \hspace{6mm} j=s'+1,s'+2,\ldots,t'.
		\end{align*}
	\end{enumerate}
\end{theorem}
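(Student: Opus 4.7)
The plan is to apply Corollary \ref{cor:Even/OddVariant} (and, when applicable, Observation \ref{obv:TwoMoreRoots}) to $f$, and then translate the resulting information about $P$ and $Q$ into the required sign structure of the coefficients via Newton's inequality for elementary symmetric polynomials.

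The first step will be to observe that $n_+ \leq 1$ (since $r$ is the only root of $f$ with possibly positive real part), so $d := n - 2\min(n_+,n_-) \geq n-2$. Corollary \ref{cor:Even/OddVariant} will then supply at least $n-2$ real roots of $P$. Because $P$ has definite parity (even or odd according to the parity of $n$), its real roots pair up symmetrically about $0$; combined with at most two non-real roots, this forces the non-real ones (if any) to form a purely imaginary conjugate pair $\pm i\beta$. Defining $\tilde{p}$ by $P(x) = \tilde{p}(x^2)$ (or $P(x) = x\tilde{p}(x^2)$ in the odd case), I would then conclude that $\tilde{p}$ has all real roots with at most one negative, namely $-\beta^2$ if present. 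The same argument applies verbatim to $Q$ and to an analogous polynomial $\tilde{q}$.

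Writing $\tilde{p}(y) = (y+\beta^2)\prod_i(y-\tau_i)$ with $\tau_i \geq 0$ (and $\beta = 0$ if $\tilde{p}$ has no negative root) and comparing coefficients will yield
\[
a_{2k} = e_k(\tau_1,\ldots,\tau_K) - \beta^2 e_{k-1}(\tau_1,\ldots,\tau_K),
\]
where $e_j$ denotes the $j$-th elementary symmetric polynomial (with $e_0 = 1$). Statement (i) will then follow from the following key lemma: whenever $\xi_1,\ldots,\xi_K$ are nonnegative and $\beta \geq 0$, the sequence $c_k := e_k(\xi) - \beta^2 e_{k-1}(\xi)$ is either strictly positive up to its last nonzero entry, or exhibits a unique transition index $s$ with $c_k > 0$ for $k<s$, $c_s \leq 0$, and $c_k < 0$ for $s < k$. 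The proof of this lemma uses the strict Newton inequality $e_k^2 > e_{k-1}e_{k+1}$ (strictness coming from the binomial ratio $\binom{K}{k}^2/(\binom{K}{k-1}\binom{K}{k+1}) > 1$): once the ratio $e_k/e_{k-1}$ drops to $\leq \beta^2$ at index $s$, Newton's inequality forces every subsequent ratio to lie strictly below $\beta^2$.

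The analogous identity $a_{2k-1} = a_1\bigl(e_k(\tau') - \beta'^2 e_{k-1}(\tau')\bigr)$ will give (ii) immediately when $a_1 > 0$. The main obstacle is the case $a_1 \leq 0$: here multiplication by $a_1$ would flip the sign pattern into something incompatible with the theorem, so I would need to show additionally that $\tilde{q}$ has no negative root (i.e., $\beta' = 0$). When $n_- > n_+$ this follows from Observation \ref{obv:TwoMoreRoots}: the assumption $a_1 < 0$ arranges that $\lim_{x\to-\infty}P/Q = +\infty$ and $\lim_{x\to\infty}P/Q = -\infty$, so both clauses of the observation apply and supply extra real roots $\nu_0 \leq \mu_1$ and $\nu_d \geq \mu_d$ of $Q$; this pushes $Q$ up to its full complement of $n-1$ real roots and forces $\tilde{q}$ to have no negative root. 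In the residual boundary case $n_- = n_+ = 1$, one has $n_0 = n-2$, and conjugate closure forces the non-$r$ roots of $f$ to consist of a single real root $-\gamma \leq 0$ together with purely imaginary conjugate pairs $\pm i\beta_k$; a direct expansion of $f(x) = (x-r)(x+\gamma)\prod_k(x^2+\beta_k^2)$ then yields $q(y) = (\gamma-r)\prod_k(y+\beta_k^2)$ (up to parity-dependent corrections), whose roots are all nonpositive, as required.
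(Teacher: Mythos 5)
Your overall architecture matches the paper's: both proofs rest on Corollary \ref{cor:Even/OddVariant} to get $d\geq n-2$ real roots of $P$ (and $d-1$ of $Q$), and both invoke Observation \ref{obv:TwoMoreRoots} in exactly the same way to upgrade $Q$ to a full complement of real roots when $a_1<0$. Where you diverge is the final conversion step: the paper substitutes $y=x^2$, observes that the resulting polynomial has at least $t-1$ positive roots, and applies Descartes' Rule of Signs plus a short combinatorial count of sign changes; you instead factor $\tilde{p}(y)=(y+\beta^2)\prod_i(y-\tau_i)$, write $a_{2k}=e_k(\tau)-\beta^2e_{k-1}(\tau)$, and use the strict Newton inequality $e_k^2>e_{k-1}e_{k+1}$ to show the ratio $e_k/e_{k-1}$ is strictly decreasing, so the sign of $a_{2k}$ can flip at most once. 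Your mechanism is sound (modulo routine care when some $e_k$ vanish) and arguably more quantitative; you are also more careful than the paper in isolating the case $n_-=n_+=1$, where Observation \ref{obv:TwoMoreRoots} does not apply and a direct expansion is needed.

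There is, however, one genuine gap: the case $a_1=0$ in part (ii). You flag ``the case $a_1\leq 0$'' but then only treat $a_1<0$. When $a_1=0$ your identity $a_{2k-1}=a_1\bigl(e_k(\tau')-\beta'^2e_{k-1}(\tau')\bigr)$ degenerates: the leading coefficient of $\tilde{q}$ becomes $-a_3$, and even after showing all roots of $Q$ are real you only obtain that $a_3,a_5,\ldots$ all share the sign of $a_3$ --- nothing in your argument pins down the sign of $a_3$ itself. But condition (ii) makes a nontrivial claim here (it forces $s'=1$, i.e.\ $a_{2j-1}<0$ for $2\leq j\leq t'$), so ruling out $a_3>0$ is essential. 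Note also that the limit computation $\lim_{x\to\pm\infty}P(x)/Q(x)=\pm x/a_1$ that feeds Observation \ref{obv:TwoMoreRoots} breaks down when $a_1=0$ because $\deg Q$ drops. The paper closes this case by perturbing the positive root, $f_\epsilon(x):=(x-r-\epsilon)\prod_{j=2}^n(x-x_j)$, which has negative coefficient of $x^{n-1}$, applying the $a_1<0$ case to $f_\epsilon$, and letting $\epsilon\to0$ by continuity of the coefficients; alternatively one can derive $a_3=b_3-b_1b_2\leq0$ directly from the inequality $b_1b_2\geq b_0b_3$ of Theorem \ref{thm:NewtonLikeInequalities} applied to $\prod_{j=2}^n(x-x_j)$. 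Either patch is short, but as written your proof does not cover this case.
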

\begin{proof}
	First suppose $n$ is even and write $n=2m$. The polynomial
	\[
		f(x)=x^{2m}+a_1x^{2m-1}+a_2x^{2m-2}+\cdots+a_{2m}
	\]
	has at most one root with positive real part. Therefore, by Corollary \ref{cor:Even/OddVariant}, the polynomial
	\[
		x^{2m}-a_2x^{2m-2}+a_4x^{2m-4}-\cdots+(-1)^{m}a_{2m}
	\]
	has at least $2m-2$ real roots. It follows that the polynomial
	\[
		y^{m}-a_2y^{m-1}+a_4y^{m-2}-\cdots+(-1)^{m}a_{2m}
	\]
	has at least $m-1$ nonnegative roots. Let $t$ be the largest integer such that $a_{2t}\neq0$. Then the polynomial
	\[
		y^{t}-a_2y^{t-1}+a_4y^{t-2}-\cdots+(-1)^{t}a_{2t}
	\]
	has at least $t-1$ positive roots. Therefore, by Descartes' rule of signs, the number of sign changes which occur between consecutive nonzero terms of the sequence
	\[
		\mathcal{T}:=(1,-a_2,a_4,-a_6,\ldots,(-1)^{t}a_{2t})
	\]
	is at least $t-1$. In particular, since $\mathcal{T}$ contains $t+1$ elements, this implies at most one of the elements in $\mathcal{T}$ is zero. There are now three cases to consider:
	
	\underline{\textit{Case 1:}} If every element in $\mathcal{T}$ is nonzero and $\mathcal{T}$ has $t$ sign changes, then $a_{2j}>0$ for each $j=1,2,\ldots,t$.
	
	\underline{\textit{Case 2:}} If every element in $\mathcal{T}$ is nonzero and $\mathcal{T}$ has $t-1$ sign changes, then the sequence
	\[
		(1,a_2,a_4,\ldots,a_{2t})
	\]
	has precisely one sign change.
	
	\underline{\textit{Case 3:}} Suppose there exists $s\in\{1,2,\ldots,t\}$ such that $a_{2s}=0$. Then, removing $a_{2s}$ from $\mathcal{T}$, we obtain a sequence
	\[
		\mathcal{T}_0:=(1,-a_2,a_4,\ldots,(-1)^{s-1}a_{2s-2},(-1)^{s-1}a_{2s+2},\ldots,(-1)^{t}a_{2t})
	\]
	with $t$ elements (each nonzero) and $t-1$ sign changes. It follows that 
	\begin{align*}
		a_{2j}&>0 \hspace{2mm} : \hspace{6mm} j=1,2,\ldots,s-1, \\
		a_{2j}&<0 \hspace{2mm} : \hspace{6mm} j=s+1,s+2,\ldots,t.
	\end{align*}
	We have now shown that the sequence $a_2,a_4,\ldots$ satisfies condition (i).
		
	Similarly, by Corollary \ref{cor:Even/OddVariant}, the polynomial
	\begin{equation}\label{eq:T'Polynomial1}
		a_1x^{2m-1}-a_3x^{2m-3}+a_5x^{2m-5}-\cdots+(-1)^{m-1}a_{2m-1}x
	\end{equation}
	has at least $2m-3$ real roots, one of which is zero. It follows that the polynomial
	\[
		a_1y^{m-1}-a_3y^{m-2}+a_5y^{m-3}-\cdots+(-1)^{m-1}a_{2m-1}
	\]
	has at least $m-2$ nonnegative roots. Let $t'$ be the largest integer such that $a_{2t'-1}\neq0$. Then the polynomial
	\begin{equation}\label{eq:T'Polynomial2}
		a_1y^{t'-1}-a_3y^{t'-2}+a_5y^{t'-3}-\cdots+(-1)^{t'-1}a_{2t'-1}
	\end{equation}
	has at least $t'-2$ positive roots. Therefore, by Descartes' rule of signs, the number of sign changes which occur between consecutive nonzero terms of the sequence
	\[
		\mathcal{T}':=(a_1,-a_3,a_5,\ldots,(-1)^{t-1}a_{2t'-1})
	\]
	is at least $t'-2$. As above, this implies at most one of the elements in $\mathcal{T}'$ is zero.
	
	If $a_1>0$, then the sequences $\mathcal{T}$ and $\mathcal{T}'$ have the same properties. In this case, it follows from the above argument that the sequence $a_1,a_3,\ldots$ satisfies condition (ii).
	
	If $a_1<0$, then for $P(x)$ and $Q(x)$ defined as in (\ref{eq:QPolynomial}), we see that $$\lim_{x\rightarrow-\infty}(P(x)/Q(x))=\infty \text{ and }\lim_{x\rightarrow\infty}(P(x)/Q(x))=-\infty.$$ Hence, by Observation \ref{obv:TwoMoreRoots}, every root of (\ref{eq:T'Polynomial1}) is real. It follows that (\ref{eq:T'Polynomial2}) has $t'-1$ positive roots and $\mathcal{T}'$ has $t'-1$ sign changes. Therefore $a_{2j-1}<0$ for all $j=1,2,\ldots,t'$.
	
	Finally, if $a_1=0$, then consider the polynomial
	\begin{align*}
		f_\epsilon(x)&:=(x-r-\epsilon)\prod_{j=2}^n(x-x_j)\\
		&=x^n-\epsilon x^{n-1}+b_2x^{n-2}+b_3x^{n-3}+\cdots+b_n,
	\end{align*}
	where $\epsilon>0$. From the above, we see that $b_{2j-1}\leq0$: $j=2,3,\ldots,$ $\lceil n/2 \rceil$. Furthermore, since each $b_j$ depends continuously on $\epsilon$ and
	\[
		\lim_{\epsilon\rightarrow0}f_\epsilon(x)=f(x),
	\]
	it follows that $a_{2j-1}\leq0$: $j=2,3,\ldots,\lceil n/2 \rceil$. Since at most one of the elements in $\mathcal{T}'$ is zero, we conclude that $a_{2j-1}<0$ for all $j=2,3,\ldots,t'$. We have now shown that the sequence $a_1,a_3,\ldots$ satisfies condition (ii).
	
	The proof for odd $n$ is similar.
\end{proof}

With Corollary \ref{cor:Even/OddVariant} established, the proof of Theorem \ref{thm:LSVariant} is quite elementary. Furthermore, the proof generalises to polynomials which have more than one root with positive real part: by combining Corollary \ref{cor:Even/OddVariant} with Descartes' Rule of Signs, bounds can be given on the number of sign changes which occur in the even/odd coefficients.

The statement of Theorem \ref{thm:LSVariant} is somewhat complicated by the fact that the multiplicity of zero as a root of
\[
	x^n-a_2x^{n-2}+a_4x^{n-4}-\cdots
\]
may be different from the multiplicity of zero as a root of
\[
	a_1x^{n-1}-a_3x^{n-3}+a_5x^{n-5}-\cdots
\]
The following example illustrates this:

\begin{example}\label{ex:DegenerateExample}
	Let
	\[
		f(x):=(x-r)g(x)=x^{2m+2}+a_1x^{2m+1}+a_2x^{2m}+\cdots+a_{2m+2},
	\]
	where $r>0$ and
	\begin{equation}\label{eq:DegenerateCase}
		g(x):=(x+\mu)\prod_{j=1}^m(x^2+\beta_j^2) \hspace{2mm} : \hspace{6mm} \mu,\beta_1,\ldots,\beta_m>0.
	\end{equation}
	The constant term in $f$ is given by
	\[
		a_{2m+2}=-r\mu\beta_1^2\beta_2^2\cdots\beta_m^2<0.
	\]
	Hence, by Theorem \ref{thm:LSVariant}, the sequence $\mathcal{T}_e:=(1,a_2,a_4,\ldots,a_{2m+2})$ of even coefficients features precisely one sign change and at most one element of $\mathcal{T}_e$ vanishes.
	
	It is not difficult to verify that the odd coefficients of $f$ are given by
	\[
		a_{2k+1}=(\mu-r)e_k(\beta_1^2,\beta_2^2,\ldots,\beta_m^2) \hspace{2mm} : \hspace{6mm} k=0,1,\ldots,m,
	\]
	where $e_k$ denotes the $k$-th elementary symmetric function. Therefore, the sign of every odd coefficient is determined by the sign of $r-\mu$. In particular, if $r=\mu$, then every odd coefficient vanishes.
\end{example}

It turns out that Example \ref{ex:DegenerateExample} is essentially unique, in that, if $f$ is not of this form and $f(0)\neq0$, then $a_k\leq0$ implies $a_{k+2},a_{k+4},\ldots<0$. To establish this fact, we will require some inequalities from \cite{NewtonLikeInequalities}, which are closely related to \emph{Newton's Inequalities}:

\begin{theorem}{\bf\cite{NewtonLikeInequalities}}\label{thm:NewtonLikeInequalities}
	Let
	\[
		g(x):=\prod_{j=1}^n(x-x_j)=x^n+b_1x^{n-1}+b_2x^{n-2}+\cdots+b_n
	\]
	be a real polynomial, where $x_1,x_2,\ldots,x_n$ are complex numbers with nonpositive real parts. If $k$ and $l$ have different parity, $1\leq k<l\leq n-1$, then
	\begin{equation}\label{eq:NewtonLikeInequalities}
		b_kb_l\geq b_{k-1}b_{l+1}.
	\end{equation}
\end{theorem}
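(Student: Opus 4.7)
The plan is to interpret the inequality $b_k b_l \ge b_{k-1} b_{l+1}$ as the nonnegativity of a specific $2\times 2$ minor of the Hurwitz matrix of $g$, and then invoke Asner's classical total nonnegativity theorem for Hurwitz matrices. Form the $n\times n$ Hurwitz matrix $H$ with entries $H_{ij} := b_{2j-i}$ (with the convention $b_s := 0$ when $s<0$ or $s>n$). A direct computation shows that the $2\times 2$ minor taken from rows $i, i+1$ and columns $j_1 < j_2$ equals
\[
	b_{2j_1-i}\, b_{2j_2-i-1} \;-\; b_{2j_2-i}\, b_{2j_1-i-1}.
\]
Setting $k := 2j_1 - i$ and $l := 2j_2 - i - 1$, this is exactly $b_k b_l - b_{k-1} b_{l+1}$; moreover $k$ and $l$ have different parity (being $\equiv i$ and $\equiv i+1 \pmod 2$), and $k < l$ is equivalent to $j_1 < j_2$. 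Conversely, every pair $(k,l)$ of different parity with $1 \le k < l \le n-1$ arises from such a minor (choose $i\in\{1,2\}$ to match the parity of $k$ and solve for $j_1, j_2$).

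I would first handle the strictly Hurwitz case, where every root of $g$ has strictly negative real part. In that case Asner's theorem asserts that $H$ is totally nonnegative, so every $2\times 2$ minor is nonnegative, yielding the desired inequality. To pass to the general case, in which some roots may lie on the imaginary axis, I would use a continuity argument: for $\epsilon > 0$, the polynomial $g_\epsilon(x) := g(x+\epsilon)$ has roots $x_j - \epsilon$, each of strictly negative real part, and its coefficients depend polynomially on $\epsilon$; applying the strict case to $g_\epsilon$ and letting $\epsilon \to 0^+$ preserves the nonstrict inequality.

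The main obstacle, and the heart of the argument, is Asner's theorem itself, which is nontrivial. A self-contained alternative would proceed by induction on $n$, peeling off either a real factor $(x+\eta)$ with $\eta\ge 0$ or a conjugate quadratic factor $(x+\alpha)^2+\beta^2$ with $\alpha\ge 0$. Writing $b_k = c_k + \eta c_{k-1}$ in the linear case and expanding gives
\[
	b_k b_l - b_{k-1} b_{l+1} = \bigl(c_k c_l - c_{k-1} c_{l+1}\bigr) + \eta\bigl(c_k c_{l-1} - c_{k-2} c_{l+1}\bigr) + \eta^2\bigl(c_{k-1} c_{l-1} - c_{k-2} c_l\bigr).
\]
The first and third summands are nonnegative by the inductive hypothesis (each still pairs indices of different parity); the middle summand pairs indices of the \emph{same} parity and is not directly accessible to the induction. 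It can be recovered by chaining two inductive inequalities (after first perturbing so that $c_j > 0$ to legitimise the division required). The quadratic-factor case generates further cross terms and is the most technically delicate step of this self-contained route.
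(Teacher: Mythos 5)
The paper offers no proof of this theorem: it is imported from \cite{NewtonLikeInequalities} and used as a black box (only the equality case, Observation \ref{obv:EqualityCase}, is extracted by inspecting that paper's argument). So there is nothing internal to compare against, and I assess your argument on its own. Your first route is correct and essentially complete. The bookkeeping checks out: the $2\times2$ minor of the Hurwitz matrix $H_{ij}=b_{2j-i}$ on rows $i,i+1$ and columns $j_1<j_2$ equals $b_kb_l-b_{k-1}b_{l+1}$ with $k=2j_1-i$ and $l=2j_2-i-1$, and the congruences $k\equiv i$, $l\equiv i+1\pmod 2$ explain exactly why the theorem carries its parity restriction: same-parity pairs do not arise as minors of $H$, and the inequality can indeed fail for them. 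Asner's theorem applies in the strictly stable case (where monicity plus stability forces all $b_j>0$), and since total nonnegativity is a closed condition, the shift $g_\epsilon(x)=g(x+\epsilon)$ with $\epsilon\to0^+$ legitimately carries the inequality over to roots on the imaginary axis; this limiting step is genuinely needed and you supply it. The one caveat is that the argument leans entirely on Asner's total-nonnegativity theorem, which is itself a substantial result --- but since the paper likewise cites Theorem \ref{thm:NewtonLikeInequalities} without proof, that is a fair trade. Your self-contained inductive alternative is the right idea but is not yet a proof: the same-parity cross term $\eta\left(c_kc_{l-1}-c_{k-2}c_{l+1}\right)$ can indeed be recovered by multiplying the two ratio inequalities $c_k/c_{k-1}\ge c_{l+1}/c_l$ and $c_{k-1}/c_{k-2}\ge c_l/c_{l-1}$ after perturbing so that all $c_j>0$, as you indicate, but the quadratic-factor case, which you leave open, is where most of the real work lies.
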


The case of equality in (\ref{eq:NewtonLikeInequalities}) is not explicitly considered in \cite{NewtonLikeInequalities}; however, by examining the proof, it is possible to characterise the equality case:

\begin{observation}\label{obv:EqualityCase}
	Assume the hypotheses of Theorem \ref{thm:NewtonLikeInequalities}. If $k$ is even and $l$ is odd, then equality occurs in (\ref{eq:NewtonLikeInequalities}) if and only if one of the following conditions holds:
	\begin{enumerate}
		\item[\textup{(i)}] zero is a root of $g$ of multiplicity at least $n-l+1$;
		\item[\textup{(ii)}] $\mathrm{Re}\,(x_j)=0$ for all $j$.
	\end{enumerate}
	If $k$ is odd and $l$ is even, then equality occurs in (\ref{eq:NewtonLikeInequalities}) if and only if (i) or (ii) holds, or $g$ is of the form (\ref{eq:DegenerateCase}).
\end{observation}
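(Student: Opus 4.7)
The plan is to establish both directions of the equivalence separately. For the easy direction—that each of the three conditions forces equality in (\ref{eq:NewtonLikeInequalities})—I would verify each case by direct computation. Case (i) is immediate: if zero is a root of $g$ of multiplicity at least $n-l+1$, then $b_l = b_{l+1} = 0$, so both sides of (\ref{eq:NewtonLikeInequalities}) vanish. For case (ii), when all roots are purely imaginary, $g(x)$ is (up to a factor of $x$ when $n$ is odd) a polynomial in $x^2$, forcing $b_j = 0$ for every odd $j$; since $k$ and $l$ have opposite parity, both $b_k b_l$ and $b_{k-1} b_{l+1}$ contain a zero factor. For case (iii), set $h(x) := \prod_{j=1}^m(x^2+\beta_j^2)$ with coefficients $c_i$ (so $c_i = 0$ for odd $i$ and $c_{2j} = e_j(\beta_1^2,\ldots,\beta_m^2)$). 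Since $g = (x+\mu)h$ gives $b_i = c_i + \mu c_{i-1}$, I obtain $b_{2j} = c_{2j}$ and $b_{2j+1} = \mu c_{2j}$, and a direct substitution yields $b_k b_l = \mu c_{k-1} c_l = b_{k-1} b_{l+1}$ whenever $k$ is odd and $l$ even. The same computation explains why case (iii) is absent from the first bullet: for $k$ even and $l$ odd one instead gets
\[
b_k b_l - b_{k-1} b_{l+1} = \mu\bigl(c_k c_{l-1} - c_{k-2} c_{l+1}\bigr),
\]
which by the classical Newton inequality for elementary symmetric functions of $\beta_j^2$ is strictly positive unless $g$ degenerates further.

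For the converse direction, I would revisit the proof of Theorem \ref{thm:NewtonLikeInequalities} in \cite{NewtonLikeInequalities} and trace every step at which a strict inequality arises. That proof decomposes $g$ into real linear factors $(x+\eta_j)$ with $\eta_j \geq 0$ and real quadratic factors $((x+\alpha_j)^2 + \beta_j^2)$ with $\alpha_j \geq 0$, $\beta_j > 0$, and reduces the quantity $b_k b_l - b_{k-1} b_{l+1}$ to a sum of explicitly nonnegative contributions indexed by these real parts and by coefficients of auxiliary smaller polynomials. Equality in (\ref{eq:NewtonLikeInequalities}) forces every such summand to vanish, and I would show that this vanishing is possible only in three ways: enough $\eta_j$ equal zero to force $b_l = b_{l+1} = 0$ (case (i)); every $\alpha_j$ and every $\eta_j$ equals zero (case (ii)); or, only in the $k$-odd/$l$-even parity, at most one nonzero $\eta_j$ together with $\alpha_j = 0$ for all $j$, which gives the form (\ref{eq:DegenerateCase}) (case (iii)).

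The main obstacle is the parity-dependent asymmetry distinguishing the two bullets of the observation. The computation sketched for case (iii) already isolates its source: the $\mu$-linear part of $b_k b_l - b_{k-1} b_{l+1}$ cancels algebraically when $k$ is odd and $l$ is even, whereas for $k$ even and $l$ odd it reduces to a genuine Newton-type quantity in the $\beta_j^2$. Propagating this observation through the full inductive decomposition—and verifying that strict positivity of the $\alpha_j$-contributions is enough to rule out every other candidate equality configuration—is the technically delicate step on which the proof hinges.
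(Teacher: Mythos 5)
Your verification of the ``if'' direction is correct and complete: the vanishing of $b_l$ and $b_{l+1}$ under (i), the vanishing of all odd-indexed coefficients under (ii), and the identity $b_{2j}=c_{2j}$, $b_{2j+1}=\mu c_{2j}$ for the form (\ref{eq:DegenerateCase}) are exactly the right computations, and your explanation of why (iii) is absent from the even-$k$/odd-$l$ bullet (the difference reduces to $\mu(c_kc_{l-1}-c_{k-2}c_{l+1})$, which is strictly positive by the strict log-concavity of the $e_j(\beta_1^2,\ldots,\beta_m^2)$ for positive $\beta_j$) is a genuinely useful observation that the paper does not spell out. Note also that the paper itself offers no proof of this observation at all: it is stated with only the remark that the characterisation can be extracted ``by examining the proof'' in \cite{NewtonLikeInequalities}, so on the sufficiency side you have already done more than the authors.

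The gap is in the ``only if'' direction, which is the entire content of the observation. What you give there is a programme, not an argument: you hypothesise that the proof in \cite{NewtonLikeInequalities} expresses $b_kb_l-b_{k-1}b_{l+1}$ as a sum of nonnegative terms indexed by a factorisation into $(x+\eta_j)$ and $((x+\alpha_j)^2+\beta_j^2)$, and you then assert---without exhibiting the decomposition or analysing its terms---that forcing every summand to vanish leads only to configurations (i), (ii), (iii). Since you have not verified that the cited proof actually has this structure, nor carried out the case analysis of which summands can vanish simultaneously (this is where the parity asymmetry between the two bullets must actually be derived, rather than merely illustrated on the special form (\ref{eq:DegenerateCase})), the necessity claim remains unproved. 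You flag this yourself as ``the technically delicate step on which the proof hinges,'' which is an accurate self-assessment: without it, the proposal establishes only one of the two implications.
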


We are now able to give a slightly more compact formulation of Theorem \ref{thm:LSVariant}:

\begin{theorem}\label{thm:LSVariantReformulated}
	Let
	\[
		f(x):=(x-r)\prod_{j=2}^n(x-x_j)=x^n+a_1x^{n-1}+a_2x^{n-2}+\cdots+a_n
	\]
	be a real polynomial, where $r>0$ and $x_2,x_3,\ldots,x_n$ are nonzero complex numbers such that $\mathrm{Re}\,(x_j)\leq0$ for all $j\in\{2,3,\ldots,n\}$ and $\mathrm{Re}\,(x_j)<0$ for some $j\in\{2,3,\ldots,n\}$. Then, assuming $\prod_{j=2}^n(x-x_j)$ is not of the form (\ref{eq:DegenerateCase}), for each $k\in\{1,2,\ldots,n-2\}$, $a_k\leq0$ implies $a_{k+2}<0$.
\end{theorem}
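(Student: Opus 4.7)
The plan is to write $f(x)=(x-r)g(x)$, where
\[
    g(x):=\prod_{j=2}^{n}(x-x_j)=x^{n-1}+b_1 x^{n-2}+\cdots+b_{n-1},
\]
and set $b_0:=1$, $b_n:=0$, so that $a_i=b_i-rb_{i-1}$. Since every root of $g$ has nonpositive real part, the coefficients $b_i$ are nonnegative, and the hypothesis $a_k\le 0$ is exactly $b_k\le rb_{k-1}$. The strategy is to combine this linear constraint with the Newton-like inequality (Theorem \ref{thm:NewtonLikeInequalities}) applied to $g$, and then to rule out equality using Observation \ref{obv:EqualityCase}.

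The case $k=n-2$ is essentially free: since $b_n=0$, one has $a_n=-rb_{n-1}$, and $b_{n-1}=\prod_{j=2}^{n}(-x_j)>0$ because no $x_j$ vanishes and $g$ has nonnegative coefficients. For $1\le k\le n-3$, applying Theorem \ref{thm:NewtonLikeInequalities} to $g$ with indices $k$ and $l=k+1$ (which have opposite parity and satisfy $l\le n-2$) gives $b_k b_{k+1}\ge b_{k-1}b_{k+2}$. Provided $b_{k-1}>0$ (see below), dividing and then substituting $b_k\le rb_{k-1}$ yields
\[
    b_{k+2}\le\frac{b_k b_{k+1}}{b_{k-1}}\le rb_{k+1},
\]
i.e.\ $a_{k+2}\le 0$. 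Equality throughout would force both $a_k=0$ and $b_kb_{k+1}=b_{k-1}b_{k+2}$. By Observation \ref{obv:EqualityCase}, the latter requires one of: zero being a root of $g$ of sufficient multiplicity; every $x_j$ being purely imaginary; or $g$ being of the form (\ref{eq:DegenerateCase}). Each of these possibilities is excluded by the theorem's hypotheses (respectively, by $x_j\ne 0$, by some $\mathrm{Re}(x_j)<0$, and by the explicit exclusion of (\ref{eq:DegenerateCase})), so $a_{k+2}<0$, as required.

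The main obstacle is the auxiliary claim that $b_i>0$ for every $0\le i\le n-1$, which is what legitimises dividing by $b_{k-1}$ above. I would establish this by writing $g=A(x)B(x)$, where $A$ is the product of the linear factors $(x+\eta_j)$ with $\eta_j>0$ together with the quadratic factors $((x+\alpha_\ell)^2+\beta_\ell^2)$ with $\alpha_\ell>0$, and $B$ is the product of the remaining $(x^2+\beta_\ell^2)$ factors (those with $\alpha_\ell=0$). Every factor of $A$ has strictly positive coefficients, so $A$ does as well; the hypothesis that some $\mathrm{Re}(x_j)<0$ guarantees $\deg A\ge 1$. Meanwhile $B$ is a polynomial in $x^2$ whose coefficients are strictly positive at even positions and zero at odd positions, with $\deg B$ even. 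A short combinatorial check---for each $0\le k\le\deg g$, finding an index $i\in[\max(0,k-\deg B),\min(\deg A,k)]$ with $i\equiv k\pmod 2$---then shows every coefficient of $g=AB$ is strictly positive. With this in place, the Newton-like inequality and the equality analysis of Observation \ref{obv:EqualityCase} finish the job.
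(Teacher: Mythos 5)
Your proof is correct and takes essentially the same route as the paper: factor $f=(x-r)g$, apply Theorem \ref{thm:NewtonLikeInequalities} to consecutive coefficients of $g$, and rule out the equality case via Observation \ref{obv:EqualityCase}. The only difference is that the paper argues by contradiction, multiplying $b_k\leq rb_{k-1}$ and $rb_{k+1}\leq b_{k+2}$ by the nonnegative quantities $b_{k+1}$ and $b_{k-1}$ respectively to reach $b_kb_{k+1}\leq b_{k-1}b_{k+2}$ directly, so your auxiliary lemma that every $b_i$ is strictly positive (needed only to legitimise dividing by $b_{k-1}$) is correct but unnecessary.
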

\begin{proof}
	Let us write $f(x)=(x-r)g(x)$, where
	\[
		g(x):=\prod_{j=2}^n(x-x_j)=x^{n-1}+b_1x^{n-2}+b_2x^{n-3}+\cdots+b_{n-1}
	\]
	and let us define $b_0:=1$. Since $a_n=-rb_{n-1}<0$, we need only consider $k\leq n-3$.
	
	Suppose (to the contrary) that there exists $k\in\{1,2,\ldots,n-3\}$ such that
	\begin{equation}\label{eq:Newton1}
		a_k=b_k-rb_{k-1}\leq0
	\end{equation}
	and
	\begin{equation}\label{eq:Newton2}
		a_{k+2}=b_{k+2}-rb_{k+1}\geq0.
	\end{equation}
	Combining (\ref{eq:Newton1}) and (\ref{eq:Newton2}) gives
	\[
		b_kb_{k+1}\leq b_{k-1}b_{k+2},
	\]
	and so, by Theorem \ref{thm:NewtonLikeInequalities},
	\[
		b_kb_{k+1}=b_{k-1}b_{k+2},
	\]
	 which, by Observation \ref{obv:EqualityCase}, contradicts the hypotheses of the theorem.
\end{proof}

We will illustrate Theorems \ref{thm:Obreschkoff} and \ref{thm:LSVariantReformulated} with an example:

\begin{example}
	Consider the polynomial
	\[
		f(x):=(x-r)\left( (x+1)^2+\beta^2 \right)^m=x^{2m+1}+a_1x^{2m}+a_2x^{2m-1}+\cdots+a_{2m+1},
	\]
	where $r,\beta>0$. We note that $a_{2m+1}=-r(1+\beta^2)^m<0$ and so $f$ must have an odd number of sign changes. If $\beta\leq\sqrt{3}$, then by Theorem \ref{thm:Obreschkoff}, $f$ must have precisely one sign change. For larger values of $\beta$, we will see that $f$ may have many changes, but by Theorem \ref{thm:LSVariantReformulated}, the sequences
	\[
		\mathcal{T}_e:=(1,a_2,a_4,\ldots,a_{2m})
	\]
	and
	\[
		\mathcal{T}_o:=(a_1,a_3,\ldots,a_{2m+1})
	\]
	must each exhibit at most one sign change.
	
	If $\beta=\sqrt{2m+1}$ and $r=1+1/m$, it is not difficult to calculate that $a_{2m-1}=a_{2m}=0$, and in this case, Theorem \ref{thm:LSVariantReformulated} implies $a_k>0$: $k=1,2,\ldots,2m-2$, i.e. $f$ has precisely one sign change. Keeping this value of $\beta$ fixed, we may vary the location of the sign change by increasing $r$. In particular, with $r=2m$, we have $a_1=a_2=0$. We note that, with this value of $\beta$, the complex roots of $f$ lie outside of the wedge (\ref{eq:Sr3}), illustrating the well-known fact that location in this wedge is sufficient, but not necessary, for the coefficients of the polynomial to exhibit one sign change. The fact that two adjacent coefficients of $f$ can vanish simultaneously as $r$ is varied indicates that this value of $\beta$ is, in a sense, ``critical'': if $\beta$ were increased slightly beyond $\sqrt{2m+1}$, it would be possible to find a value of $r$ such that $f$ has three sign changes.
	
	Finally, let us consider an extreme case: if $\beta=2m$ and
	\[
		2m<r<2m+\frac{1}{2m},
	\]
	it is not difficult to check that $a_1<0$ and $a_{2m}>0$ (and hence $f$ has the maximal possible number of sign changes).
\end{example}



\bibliographystyle{amsplain}
\bibliography{Bibliography}

\end{document}